\newcommand*{\C}{\ensuremath{\mathbb C}\xspace}
\newcommand*{\Do}{\ensuremath{\boldsymbol D}\xspace}
\newcommand*{\Dou}{\ensuremath{\boldsymbol {Dou}}\xspace}
\newcommand*{\Ho}{\ensuremath{\mathfrak H}\xspace}
\newcommand*{\E}{\ensuremath{\mathcal E}\xspace}
\newcommand*{\F}{\ensuremath{\mathcal F}\xspace}
\newcommand*{\Gf}{\ensuremath{\mathfrak G}\xspace}
\newcommand*{\Ii}{\ensuremath{\mathcal I}\xspace}
\newcommand*{\M}{\ensuremath{\mathcal M}\xspace}
\newcommand*{\m}{\ensuremath{\mathfrak m}\xspace}
\newcommand*{\Nc}{\ensuremath{\mathcal N}\xspace}
\newcommand*{\Oo}{\ensuremath{\mathcal O}\xspace}
\newcommand*{\PP}{\ensuremath{\mathbb P}\xspace}
\newcommand*{\Sc}{\ensuremath{\mathcal S}\xspace}
\newcommand*{\Vv}{\ensuremath{\mathfrak V}\xspace}
\newcommand*{\Z}{\ensuremath{\mathbb Z}\xspace}
\DeclareMathOperator{\Ann}{Ann}
\DeclareMathOperator{\Hom}{Hom}
\DeclareMathOperator{\Ker}{Ker}
\DeclareMathOperator{\pr}{pr}
\DeclareMathOperator{\rank}{rank}
\DeclareMathOperator{\Spec}{Spec}
\DeclareMathOperator{\Sym}{Sym}
\DeclareMathOperator{\supp}{supp}
\DeclareMathOperator{\trdeg}{tr.deg}
\let\eps\varepsilon
\let\ph\varphi
\newcommand*{\lst}[3][1]{\ensuremath{#2_{#1}, \ldots, #2_{#3}}\xspace}
\newtheorem{proposition}{Proposition}[section]
\newtheorem{theorem}[proposition]{Theorem}
\newtheorem{lemma}[proposition]{Lemma}
\theoremstyle{remark}
\theoremstyle{definition}
\newtheorem{definition}[proposition]{Definition}
\newtheorem{Not}[proposition]{Notation}
\title[Meromorphic functions on neighborhoods of rational
curves]{On fields of meromorphic functions on neighborhoods of rational
curves}
\author{Serge Lvovski}
\address{National Research University Higher School of Economics,
  Russian Federation 
  \hfil\break
Scientific Research Institute for System Analysis of the National
Research Centre ``Kurchatov Institute'' 
}
\email{lvovski@gmail.com}
\keywords{Neighborhoods of rational curves, deformations of analytic
  subspaces, Enriques classification of surfaces}
\subjclass{32H99, 32G13}
\thanks{This study was partially supported by the HSE
  University Basic Research Program and by the SRISA project
  FNEF-2024-0001 (Reg. No 1023032100070-3-1.2.1).}
\begin{document}

\begin{abstract}
Suppose that $F$ is a smooth and connected complex surface (not
necessarily compact) containing a smooth rational curve with positive
self-intersection. We prove that if there exists a non-constant
meromorphic function on~$F$, then the field of meromorphic functions
on~$F$ is isomorphic to the field of rational functions in one or two
variables over~\C.
\end{abstract}


\maketitle

\section{Introduction}

It is well known (see
Proposition~\ref{fin.gen} below for references) that the field of
meromorphic functions on a $2$-dimensional neighborhood of the Riemann
sphere with positive self-intersection is a finitely generated
extension of~\C, of transcendence degree at most~$2$. In recent
papers~\cite{FLL2022,FL-L-S,Lvovski} examples of such neighborhoods
were constructed for which this transcendence degree assumes all
values from~$0$ through~$2$ (in particular, examples of
non-algebraizable neighborhoods with transcendence degree~$2$ were
found).

Now it seems natural to ask what fields may occur as such fields of
meromorphic functions (in the case of transcendence degree $1$ or~$2$,
of course). It turns out that the answer to this question is
simple
and somehow disappointing.
To wit, the main results of the paper are as
follows.

\begin{proposition}\label{main}
Suppose that $F$ is a non-singular connected complex surface and that
there exists a curve $C\subset F$, $C\cong\PP^1$, such that $(C\cdot
C)>0$. Let \M be the field of meromorphic functions on~$F$. 

If the transcendence degree of \M over \C is at least~$2$, then
$\M\cong\C(T_1,T_2)$ \textup(the field of rational functions\textup).
\end{proposition}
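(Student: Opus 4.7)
The strategy is to realize $\M$ as the function field of a rational algebraic surface. By Proposition~\ref{fin.gen}, $\M$ is finitely generated over $\C$ of transcendence degree~$2$, so it is the function field of a smooth projective surface $S$, and the inclusion $\C(S) = \M \hookrightarrow \M$ gives a dominant meromorphic map $\varphi \colon F \dashrightarrow S$ that induces an isomorphism of function fields; in particular, $\varphi$ is generically one-to-one. It remains to show that $S$ is rational.

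To produce enough rational curves in $S$, I would use the positive self-intersection of~$C$. Since $N_{C/F} \cong \Oo_{\PP^1}(n)$ with $n = (C \cdot C) \geq 1$ satisfies $H^1(C, N_{C/F}) = 0$, Kodaira's theorem on deformations of compact complex submanifolds puts $C$ in an $(n+1)$-dimensional holomorphic family $\{C_t\}_{t \in U}$ of smooth rational curves in $F$ that sweeps out a neighborhood of~$C$. For a generic $t$ the restriction $\varphi|_{C_t}$ is non-constant: otherwise $\varphi$ would have rank at most~$1$ on a non-empty open subset of~$F$, contradicting its dominance. Replacing $C$ by such a deformation, $C' := \overline{\varphi(C)}$ is a rational curve in~$S$. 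Moreover, the generic injectivity of $\varphi$ forces the assignment $t \mapsto \overline{\varphi(C_t)}$ to be generically injective as well---two distinct $C_{t_1}, C_{t_2}$ with the same image would give a generic point of that image two preimages in~$F$---so $S$ carries an $(n+1)$-dimensional family of rational curves that covers a dense open subset.

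To conclude, I would appeal to the Enriques--Kodaira classification: a smooth projective surface carrying a family of rational curves of dimension at least~$2$ must be rational. Such a surface is uniruled, hence ruled over some smooth curve~$B$; if $g(B) \geq 1$, the only rational curves on~$S$ would be contained in fibers of the ruling and thus form merely a one-dimensional family, a contradiction. Therefore $g(B) = 0$ and $S$ is rational, so $\M \cong \C(S) \cong \C(T_1, T_2)$.

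The main obstacle I anticipate is keeping careful track of the dimension of the image family in~$S$. The generic injectivity of $\varphi$---which follows from the assumption that $\M$ has transcendence degree exactly~$2$ and is the full field of meromorphic functions on~$F$---is essential: without it one could only conclude that $S$ is uniruled, not that the family of rational curves is at least two-dimensional, and hence not that $S$ must be rational.
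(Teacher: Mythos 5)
There is a genuine gap at the very first step. You assert that the inclusion $\C(S)=\M\hookrightarrow\M$ ``gives a dominant meromorphic map $\varphi\colon F\dasharrow S$\dots; in particular, $\varphi$ is generically one-to-one.'' Neither claim is a formality here, because $F$ is not an algebraic variety, and the dictionary between field inclusions and dominant rational maps is available only in the algebraic category. Concretely, on a non-compact surface, algebraic independence of meromorphic functions does \emph{not} imply that the map they define has generic rank~$2$: on $\C^2$ the functions $z$ and $e^z$ are algebraically independent, yet $x\mapsto(z(x),e^z(x))$ has rank~$1$ everywhere. So dominance of $\varphi$ is exactly what must be proved, and it is the technical heart of the paper's argument (Lemma~\ref{lemma:O}): assuming the derivative has rank $\le 1$ wherever defined, the paper picks a member $C_b$ of the sweeping family of rational curves on which the map is non-constant (possible because any two members meet, as $(C\cdot C)>0$), observes by Chow's theorem that the image of $C_b$ is an algebraic curve $Z$, and then uses the level-curve foliation to show that all nearby members $C_{b'}$, hence an open subset of the good neighborhood $W$, map into $Z$, forcing the generators to be algebraically dependent --- a contradiction. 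Your proposal cannot run without this, and your own non-constancy argument (``otherwise $\varphi$ would have rank at most~$1$\dots, contradicting its dominance'') is circular, since dominance is precisely what is in question. The further claim that $\varphi$ is generically one-to-one is likewise unjustified --- an isomorphism of function fields does not imply birationality when the source is merely a complex manifold; a generically two-to-one map onto $S$ is a priori compatible with $\M(F)=\varphi^*\C(S)$ --- though it is avoidable: once generic rank~$2$ is known, the fibers of $t\mapsto\overline{\varphi(C_t)}$ must be discrete, since a positive-dimensional fiber would make an open subset of $F$, swept by those $C_t$, map into a single curve.

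Granting dominance, your second half does diverge genuinely from the paper: you deduce rationality of $S$ from a $\ge 2$-dimensional covering family of rational curves via uniruledness and the classification of ruled surfaces, whereas the paper applies Castelnuovo's criterion (Theorem~\ref{Castelnuovo}) and kills $H^0(X,\Omega^1_X)$ and $H^0(X,\omega_X^{\otimes 2})$ by pulling tensors back to the good neighborhood $W$, where they vanish by Lemma~\ref{/omega^1=0} (transverse rational curves through every point) and by Savelyev's $U\times\PP^1$ structure after blowing up $d$ points. Your route is viable in principle and avoids the bicanonical computation, but it needs one more bookkeeping step you do not supply: the family $\{\overline{\varphi(C_t)}\}$ is a priori only an analytic family of curves in $S$, and to invoke the algebraic statement ``a surface with a two-dimensional family of rational curves is rational'' one must place these curves (of locally constant degree) in a Chow variety and pass to an algebraic family dominating $S$. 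The paper's Castelnuovo route sidesteps both this and any appeal to uniruledness, at the cost of the elementary but careful vanishing arguments on $W$.
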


\begin{proposition}\label{main1}
Suppose that $F$ is a non-singular connected complex surface and that
there exists a curve $C\subset F$, $C\cong\PP^1$, such that $(C\cdot
C)>0$. Let \M be the field of meromorphic functions on~$F$. 

If the transcendence degree of \M over \C is $1$, then
$\M\cong\C(T)$ \textup(the field of rational functions\textup).
\end{proposition}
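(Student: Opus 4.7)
The plan is to show that the smooth projective curve $B$ with $\C(B)=\M$ has genus zero, by exploiting deformations of $C$. Since $\trdeg_\C\M=1$ and $\M$ is finitely generated over $\C$ (by Proposition~\ref{fin.gen}), $\M$ is the function field of a unique smooth projective curve $B$, and there is an associated meromorphic map $\psi\colon F\dashrightarrow B$; replacing $B$ by the base of a Stein factorisation, we may assume $\psi$ has connected fibres. The goal is to prove $B\cong\PP^1$.

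Since $C\cong\PP^1$ is proper and $B$ is projective, $\psi|_C$ extends uniquely to a morphism $\PP^1\to B$. If this morphism is non-constant, then $B$ has genus zero and we are done. So assume it is constant, with $\psi(C)=\{b_0\}$; we derive a contradiction under the standing assumption $g(B)\ge 1$, which forces $\psi|_{C'}$ to be constant on \emph{every} smooth rational curve $C'\subset F$ (a curve of positive genus admits no non-constant morphism from $\PP^1$).

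Set $d:=(C\cdot C)\ge 1$, so $N_{C/F}\cong\Oo_{\PP^1}(d)$ is ample. Then $H^1(N_{C/F})=0$ and $N_{C/F}$ is generated by global sections, so the Douady space of deformations of $C$ in $F$ is smooth of dimension $d+1\ge 2$ at $[C]$, and the universal family maps submersively onto an open neighbourhood $U$ of $C$ in $F$. Every nearby deformation $C_t$ is a smooth rational curve, so $\psi|_{C_t}$ is constant; call this constant $\beta([C_t])\in B$. This defines a holomorphic map $\beta$ from a neighbourhood of $[C]$ in the Douady space to $B$. Since its source has dimension $\ge 2$ and its target has dimension $1$, some fibre $\beta^{-1}(b_1)$ has positive dimension, and all the corresponding $C_t$ then lie inside the single analytic set $\psi^{-1}(b_1)\cap U$.

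To finish, note that (after resolving the zero-dimensional indeterminacy of $\psi$, or equivalently after moving $C$ by a small deformation to avoid it) $\psi^{-1}(b_1)\cap U$ is a $1$-dimensional analytic subset with only finitely many irreducible components meeting the compact curve $C$. The positive-dimensional subfamily $\beta^{-1}(b_1)$ would then have to produce an infinite family of pairwise distinct embedded curves $C_t$, each equal to one of finitely many components of $\psi^{-1}(b_1)\cap U$, a contradiction. The main technical obstacle is this final step: arranging the indeterminacy resolution so as to preserve $(C\cdot C)>0$ (or avoiding it by replacing $C$ with a generic Douady deformation), and verifying that the Douady parametrisation is effective, so that distinct parameters genuinely give distinct embedded curves.
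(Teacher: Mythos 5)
Your proposal is correct in outline, but it takes a genuinely different route from the paper. The paper also reduces to a smooth projective curve $X$ with $\M(X)\cong\M(F)$, but then proves $X\cong\PP^1$ via differential forms: a nonzero holomorphic $1$-form on $X$ would pull back under $\Phi_1\colon F\dasharrow X$ (not identically zero since $\rank D\Phi_1=1$ on some open set, and extendable across the discrete indeterminacy locus) to a nonzero holomorphic $1$-form on a good neighborhood $W$ of~$C$, contradicting Lemma~\ref{/omega^1=0}, which is proved by restricting the form to the two transverse rational curves through each point of~$W$ (Proposition~\ref{transverse}). You instead exploit the rigidity of maps $\PP^1\to B$ for $g(B)\ge1$ plus a dimension count on the Douady space: constancy of $\psi$ on each deformation gives a map $\beta$ from the $(d+1)$-dimensional parameter space ($d+1\ge 2$) to~$B$; a positive-dimensional fibre of $\beta$ produces infinitely many pairwise distinct compact curves $C_t$, each meeting $C$ because $(C_t\cdot C)=d>0$, all lying in a single $1$-dimensional fibre of~$\psi$, which has only finitely many irreducible components meeting the compact curve~$C$ --- a contradiction. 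The technicalities you flag at the end are genuinely fillable and do not require your blowup or perturbation workaround: the closure of $\psi^{-1}(b_1)$ across the discrete indeterminacy set is analytic by the Remmert--Stein extension theorem (the exceptional set is $0$-dimensional, the fibre purely $1$-dimensional); effectiveness is automatic, since the Douady space parametrizes subspaces bijectively and reduced subspaces with equal support coincide, while all nearby $C_t$ are reduced by Proposition~\ref{def1}; and holomorphy of $\beta$ follows from local sections of the universal family, which exist by the submersivity established in Proposition~\ref{prop:good_neighb}. (Your Stein-factorisation remark is never used and can simply be dropped.) As for what each approach buys: the paper's argument is shorter at this point because it reuses verbatim the machinery built for Proposition~\ref{main} --- the same Lemma~\ref{/omega^1=0} kills $1$-forms in both proofs --- whereas your argument avoids differential forms and Proposition~\ref{transverse} entirely, at the cost of the Remmert--Stein and component-finiteness bookkeeping; both ultimately rest on the same deformation results, Propositions~\ref{def1} and~\ref{prop:good_neighb}.
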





Summing up, if $F$ is a smooth and connected complex surface contaning
a copy of the Riemann sphere with positive self-intersetion, then the
field of meromorphic functions on~$F$ is isomorphic to either~\C or
$\C(T)$ or $\C(T_1,T_2)$.

Thus, the field of meromorphic functions without any additional
structure cannot serve as an invariant that would help to classify
neighborhoods of rational curves with positive self-intersection.

Proposition~\ref{main1} agrees with the example from~\cite[Section 3.2]{FL-L-S}.

The proof of 
Propositions~\ref{main} and~\ref{main1} are 
based on the study of
(embedded) deformations of the curve~$C\subset F$. Properties of such
deformations are well known in the algebraic context; the classical paper~\cite{Kodaira}
implies a complete description of deformations of rational curves on
arbitrary smooth complex surfaces, but this paper does not contain a description of deformations of rational curves passing through given points;
I prove the necessary facts
(Propositions~\ref{def1} and~\ref{def2}) in the ad hoc manner,
using a result of Savelyev~\cite{Savelyev}.

The paper is organized as follows. In Section~\ref{sec:def:gen} we
recall, following Douady~\cite{Douady}, general facts on deformations
of compact analytic subspaces in a given analytic space. In
Section~\ref{sec:deform:rational} we prove some pretty natural results
on deformations of smooth rational curves in smooth (and not
necessarily compact) complex surfaces; the results of this section do
not claim much novelty. In Section~\ref{sec:good} we establish some
more specific properties of deformations of rational
curves on surfaces. Finally, in Section~\ref{sec:Castelnuovo}
(resp.~\ref{sec:trdeg=1}) we prove
Proposition~\ref{main} (resp.~\ref{main1}).

\subsection*{Acknowledgements}
I am grateful to Ekaterina Amerik 
for discussions and to the anonymous referee for numerous useful
suggestions. 

\section*{Notation and conventions}

All topological terms refer to the classical topology unless specified
otherwise. By coherent sheaves we mean analytic coherent sheaves.

If $X$ is a connected complex manifold, then $\M(X)$ is the field of
meromorphic functions on~$X$.

If $Y$ is a complex submanifold of a complex manifold~$X$, then the
normal bundle to $Y$ in~$X$ is denoted by~$\Nc_{Y|X}$.

Our notation for the $n$-dimensional complex projective space
is~$\PP^n$.

The projectivization $\PP(E)$ of a linear space $E$ is the set of
lines in~$E$, not of hyperplanes.

If $C_1$ and $C_2$ are compact Riemann surfaces embedded in a smooth
complex surface~$F$, then their intersection index is denoted by
$(C_1\cdot C_2)$.

If $C$ is a Riemann surface isomorphic to $\PP^1$ and $n\in\Z$, then
$\Oo_C(n)$ stands for the line bundle aka invertible sheaf of
degree~$n$ on~$C$.

By \emph{Veronese curve} $C_d\subset\PP^d$ we mean the image of the
mapping $\PP^1\to\PP^d$ defined by the formula
$(z_0:z_1)\mapsto(z_0^d: z_0^{d-1}z_1:\dots:z_1^d)$. 

Analytic spaces are allowed to have nilpotents in their structure sheaves
(however, analytic spaces with nilpotents will be acting mostly behind the
scenes). If $X$ is an analytic space, then
the analytic space obtained from $X$ by quotienting out the nilpotents
is denoted by~$X_{\mathrm{red}}$.

If $X$ is an analytic space and $x\in X$, then $T_xX$ is the Zariski
tangent space to $X$ at~$x$ (i.e., $T_xX=(\m_x/\m_x^2)^*$, where
$\m_x$ is the maximal ideal of the local ring~$\Oo_{X,x}$). 

In the last two sections we use meromorphic mappings (which will be denoted
by dashed arrows). For the general definition we refer the reader
to~\cite[page~75]{AndreottiStoll} (one caveat: a meromorphic function
on a smooth complex manifold~$X$ \emph{is not}, in general, a
meromorphic mapping from~$X$ to~\C); for
our purposes it suffices to keep in mind two facts concerning
them. First, if $F\colon X\dasharrow Y$ is a meromorphic mapping,
where $X$ is a complex manifold, then the indeterminacy locus of~$F$
is an analytic subset in~$X$ of codimension at least~$2$. Second, if
$X$ is a connected complex manifold and \lst[0]fn are meromorphic
functions on~$X$ of which not all are identically zero, then the
formula $x\mapsto (f_0(x):\dots f_n(x))$ defines a meromorphic mapping
from~$X$ to~$\PP^n$.

\section{Deformations: generalities}\label{sec:def:gen}

In this section we recall (briefly and without proofs) the general
theory (see~\cite{Douady} for details). Suppose that $F$ is an analytic
space (in the applications we have in mind $F$ will be a smooth
complex surface). Then there exists the \emph{Douady space}~$\Do(F)$,
which parametrizes all the compact analytic subspaces of~$F$. This
means the following.

For any analytic space~$B$, a \emph{family of compact analytic subspaces
  of~$F$ with the base~$B$} is a closed analytic subspace~$\Ho\subset
B\times F$ that is proper and flat over~$B$. Now the Douady
space~$\Do(F)$ comes equipped with the \emph{universal family}
$\Ho(F)\subset \Do(F)\times F$ of
subspaces of~$F$ over $\Do(F)$, which satisfies the following
property: for any family over an analytic space~$B$ there
exists a unique morphism $B\to\Do(F)$ such that the family over~$B$ is
induced, via this morphism, from the universal family
over~$\Do(F)$. Applying this definition to the case in which $B$ is a
point (hence, a family over~$B$ is just an individual compact analytic
subspace of~$F$), one sees that there is a 1--1 correspondence
between compact analytic subspaces of~$F$ and fibers of the projection
$\Ho(F)\to\Do(F)$.

At this point one has to say that the Douady space is not an analytic
space: it is a more general object, which Douady calls a Banach
analytic space. However, every point $a\in\Do(F)$ has a neighborhood
$\Delta\ni a$ that is isomorphic to an analytic space in the usual sense.

This construction can be generalized as follows. If \E is a coherent
analytic sheaf on $F$, then there exists a Banach analytic space
$\Dou(\E)$ parametrizing coherent subsheaves $\Sc\subset\E$ such that
the quotient $\E/\Sc$ has compact support. To be more precise, a
family of subsheaves of \E with base~$B$ is a coherent subsheaf
$\varSigma\subset \pr_2^*\E$ on $B\times F$ such that
$\pr_2^*\E/\varSigma$ is flat over~$B$ and
$\supp(\pr_2^*\E/\varSigma)$ is proper over~$B$, and there is a
universal family of subsheaves of~\E over $\Dou(\E)$.

The space
$\Dou(\E)$ is also locally isomorphic to an analytic space. If one puts
$\E=\Oo_F$ in this construction, one obtains a canonical isomorphism
$\Do(F)\cong\Dou(\Oo_F)$.

If $a\in\Dou(\E)$ is a point corresponding to the subsheaf
$\Sc\subset\E$, then one can define the Zariski tangent
space~$T_a\Dou(\E)$ to $\Dou(\E)$ at~$a$ as $T_a\Delta$, where
$\Delta\subset\Dou(\E)$ is any neighborhood of~$a$ that is isomorphic
to an analytic space.  This Zariski tangent space is canonically
isomorphic to $\Hom(\Sc,\E/\Sc)$ (see \cite[Section 9.1, Remarque
  3]{Douady}).

If a coherent sheaf \E is a subsheaf of a coherent sheaf~\F and if
$\F/\E$ has compact support, then $\Dou(\E)$ is naturally embedded in
$\Dou(\F)$ (a subsheaf of \E can be regarded as a subsheaf
of~\F). This embedding induces injective homomorphisms of Zariski
tangent spaces. Indeed, let $\Spec\C[\eps]/(\eps^2)$ be the analytic
space consisting of one point such that the ring of functions
is~$\C[\eps]/(\eps^2)$.  Then $T_a\Dou(\E)$, as a set, is canonically
bijective to the set of families of subsheaves of \E over the base
$\Spec\C[\eps]/(\eps^2)$ (ibid.).  If $\supp(\F/\E)$ is compact, any
family of subsheaves of \E (over an arbitrary base) is automatically a
family of subsheaves of~\F, and different families of subsheaves of
\E, being different subsheaves of $\pr_2^*\E$, are ipso facto
different subsheaves of $\pr_2^*\F$.

In the sequel we will be using the following notation.

\begin{Not}\label{not:C_a}
If $F$ is a complex manifold and $a\in\Do(F)$, then $C_a$ stands for
the analytic subspace of~$F$ corresponding to~$a$.

Similarly, if \E is a coherent sheaf on a complex manifold and
$a\in\Dou(\E)$, then $\Sc_a$ is the subsheaf in \E corresponding to~$a$.
\end{Not}

\section{Deformations of rational curves}\label{sec:deform:rational}

In this section we state and prove two auxiliary results concerning
deformations of smooth rational curves on complex surfaces. These
results are well known for deformations of curves on which no restrictions are imposed. For example, Proposition~\ref{def1}
below follows immediately from the main result of~\cite{Kodaira}, and its
 algebraic-geometric counterpart (for smooth
algebraic surfaces over a field of characteristic zero) follows
immediately from the theorem in Lecture~23 of~\cite{Mumford}. However,
I did not manage to find a suitable reference for deformations of curves passing through given points.

We will be using the general theory from Section~\ref{sec:def:gen} in
the following setting. $F$ will always be a smooth and connected
complex surface, $C\subset F$ will be a complex submanifold isomorphic
to $\PP^1$ (the Riemann sphere), and we will always assume that the
self-intersection index $d=(C\cdot C)$ is non-negative. By $\Do(F,C)$
we will mean an unspecified open subset of $\Do(F)$ that contains the
point corresponding to $C\subset F$ and is isomorphic to an analytic
space. The reader will check that this indeterminacy of definition
does not affect the arguments that follow.

Moreover, suppose that $S=\{\lst pm\}\subset C$ is a subset of
cardinality $m\le d=(C\cdot C)$. Let $\Ii_S\subset\Oo_F$ be the ideal
sheaf of the analytic subset $S\subset F$. If $\Ii\subset \Ii_S$ is a
coherent subsheaf, then it follows from the exact sequence
\begin{equation*}
  0\to \Ii_S/I\to\Oo_F/\Ii\to \Oo_F/\Ii_S\to 0
\end{equation*}
that $\supp(\Oo_F/\Ii)= \supp(\Ii_S/I)\cup S$, so
$\supp(\Ii_S/I)$ is compact if and only if $\supp(\Oo_F/\Ii)$ is
compact. Hence, the Douady space $\Dou(\Ii_S)$ parametrizes the (ideal
sheaves of) compact analytic subspaces of $F$ containing the subset~$S$. An
unspecified open subset of $\Dou(\Ii_S)$ containing the point
corresponding to~$C$ (strictly speaking, to the ideal sheaf of~$C$,
which is a subsheaf of $\Ii_S$) and isomorphic to an analytic space,
will be denoted by~$\Do(F,C,S)$. In view of the natural embedding of
$\Dou(\Ii_S)$ into $\Dou(\Oo_F)=\Do(F)$ we will always assume that
$\Do(F,C,S)\subset\Do(F,C)$.

Extending Notation~\ref{not:C_a},
we will denote by $C_a\subset F$ the analytic subspace of~$F$
corresponding to the point~$a\in\Do(F,C,S)$. 

Let $a\in \Do(F,C)$ be the point corresponding to $C\subset F$, and let
$\Ii_C\cong \Oo_F(-C)$ be the ideal sheaf of~$C\subset F$.
According to the general theory, the Zariski tangent space to $\Do(F,C)$
at~$a$ is 
\begin{equation}\label{eq:h^0(N)}
T_a\Do(F,C)=\Hom_{\Oo_F}(\Ii_C,\Oo_F/\Ii_C)\cong
\Hom_{\Oo_C}(\Ii_C/\Ii_C^2,\Oo_C)=\Nc_{F|C}
\cong\Oo_C(d)  
\end{equation}
(here and below, $\Hom$ refers to the space of global homomorphisms, not
to the $\Hom$~sheaf).
Similarly, taking into account that $\Ii_C\subset\Ii_S$ and denoting
by $b\in\Do(F,C,S)$ the point corresponding to~$C$, one has
\begin{multline*}
T_b\Do(F,C,S)\cong \Hom_{\Oo_F}(\Ii_C,\Ii_S/\Ii_C)\\{}\cong
\Hom_{\Oo_C}(\Ii_C/\Ii_C^2,\Ii_S)\cong\Nc_{F|C}\otimes\Oo_C(-S)\cong\Oo_C(d-m).
\end{multline*}

The main results about deformations of $C\subset F$ that we need are as
follows.

\begin{proposition}\label{def1}
Suppose that $F$ is a smooth and connected complex surface, $C\subset
F$ is a complex submanifold isomorphic to $\PP^1$, and $d=(C\cdot
C)\ge 0$. Then there exists a neighborhood~$\Delta\ni a$ of the
point~$a\in\Do(F,C)$ 
corresponding to~$C$ such that the analytic space $\Delta$ is a
smooth complex manifold of dimension~$d+1$ and, for any $b\in\Delta$,
$C_b\cong\PP^1$.
\end{proposition}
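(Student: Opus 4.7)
The plan is to combine the tangent space computation from \eqref{eq:h^0(N)} with the standard vanishing of obstructions, and then to use openness of smoothness plus the topological rigidity of $\PP^1$.

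\textbf{Step 1: Tangent space and obstructions.} The computation in \eqref{eq:h^0(N)} identifies $T_a\Do(F,C)$ with $H^0(C,\Nc_{C|F})\cong H^0(\PP^1,\Oo(d))$, a vector space of dimension exactly $d+1$. The natural obstruction space for deforming a closed subscheme governed by $\mathrm{Hom}(\Ii_C,\Oo_C)$ is $\mathrm{Ext}^1(\Ii_C,\Oo_C)\cong H^1(C,\Nc_{C|F})\cong H^1(\PP^1,\Oo(d))$, which vanishes because $d\ge 0$. By the standard unobstructedness argument (carried out inductively: given a deformation over $\Spec\C[\eps]/(\eps^n)$, the obstruction to lifting it to $\Spec\C[\eps]/(\eps^{n+1})$ lies in $H^1(\Nc_{C|F})=0$, so the lift exists) one concludes that a local analytic model $\Delta\subset \Do(F,C)$ of the Douady space near $a$ is smooth of dimension $h^0(\Nc_{C|F})=d+1$. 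In the analytic setting this is precisely the conclusion of \cite{Kodaira}.

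\textbf{Step 2: Nearby fibers are smooth.} After restricting $\Delta$ we have the universal family $\Ho|_\Delta\subset \Delta\times F$, which is proper and flat over $\Delta$, with fiber over $a$ equal to the smooth compact submanifold~$C$. Smoothness of a fiber of a proper flat morphism is an open condition in the base (the non-smooth locus in the total space is closed and its image under the proper projection to $\Delta$ is closed and does not contain $a$). Hence, shrinking $\Delta$, every fiber $C_b\subset F$ is a smooth compact complex submanifold.

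\textbf{Step 3: Nearby fibers are $\PP^1$.} Since $\Ho|_\Delta\to\Delta$ is now a proper holomorphic submersion, Ehresmann's theorem produces a $C^\infty$ local trivialization; in particular each $C_b$ is diffeomorphic to $C\cong S^2$. Therefore each $C_b$ is a compact connected Riemann surface of genus~$0$, and so $C_b\cong\PP^1$.

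\textbf{Expected main obstacle.} The routine part is the tangent space and the diffeomorphism argument; the substantive point is the unobstructedness, i.e.\ transferring the vanishing $H^1(\PP^1,\Oo(d))=0$ into genuine smoothness of the Douady space as an analytic germ, rather than just an equality of Zariski tangent dimensions. This requires either invoking the Kodaira-type unobstructedness theorem directly or spelling out the inductive obstruction calculus for subschemes in Douady's framework (using $T_a\Do(F,C)\cong\mathrm{Hom}(\Ii_C,\Oo_C)$ and obstructions in $\mathrm{Ext}^1(\Ii_C,\Oo_C)$).
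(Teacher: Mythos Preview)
Your argument is correct and is in fact the standard route: the paper itself remarks that Proposition~\ref{def1} ``follows immediately from the main result of~\cite{Kodaira}'', which is exactly what your Step~1 invokes, and your Steps~2--3 coincide with the paper's Lemma~\ref{nearby_fibers}.

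The paper, however, deliberately avoids citing Kodaira's unobstructedness theorem and instead proves Proposition~\ref{def1} as the special case $m=0$ of Proposition~\ref{def2}. Its method is: blow up $d$ points on~$C$ to reduce to self-intersection~$0$; apply Savelyev's theorem to obtain an explicit one-parameter family; push this family back down to get a smooth arc in $\Do(F,C)$ through~$a$ whose tangent direction is the one-dimensional subspace $H^0(\Nc_{C|F}(-q_1-\cdots-q_d))\subset H^0(\Nc_{C|F})$; vary the $q_j$ so that these directions sweep out a Zariski-open subset of $\PP(T_a\Do(F,C))$; and conclude smoothness via the elementary criterion of Lemma~\ref{lemma:criterion}. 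The payoff of this more hands-on approach is that it works verbatim for the pointed Douady space $\Do(F,C,S)$, for which the author could not locate a suitable reference for the obstruction-theoretic statement. Your approach is cleaner for Proposition~\ref{def1} alone, but to cover Proposition~\ref{def2} you would have to set up the deformation theory of subspaces constrained to pass through~$S$ (identifying the obstruction space with $H^1(\Nc_{C|F}(-S))$, which again vanishes) inside Douady's framework; this is morally straightforward but is precisely the bookkeeping the paper sidesteps.
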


A similar result, of which Proposition~\ref{def1} is a
particular case, holds for $\Do(F,C,S)$.

\begin{proposition}\label{def2}
In the above setting, suppose that $S=\{\lst pm\}\subset C$ is a
subset of cardinality $m\le d$. Then there exists a neighborhood
$\Delta\ni a$ of the point~$a\in\Do(F,C,S)$ corresponding to~$C$ such
that the analytic space $\Delta$ is a smooth complex manifold of
dimension~$d-m+1$ and, for any $b\in\Delta$, $C_b\cong\PP^1$.
\end{proposition}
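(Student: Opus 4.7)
The plan is to derive Proposition~\ref{def2} from Proposition~\ref{def1} by showing that the $m$ incidence conditions ``$p_i\in C_b$'' cut out $\Do(F,C,S)$ inside the smooth $(d+1)$-dimensional family from Proposition~\ref{def1} as a transverse intersection of $m$ smooth divisors. Let $\Delta_0\ni a$ be the smooth neighborhood in $\Do(F,C)$ provided by Proposition~\ref{def1}; after shrinking, I may assume that the universal family restricts over $\Delta_0$ to a holomorphic $\PP^1$-bundle $\pi\colon\mathcal U\to\Delta_0$, so $\mathcal U$ is a smooth complex manifold of dimension $d+2$ carrying the natural evaluation map $ev\colon\mathcal U\to F$, $(b,x)\mapsto x$. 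For each $i$ I would set $I_i:=\pi(ev^{-1}(p_i))\subset\Delta_0$.

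The key technical step is to verify that $ev$ is a submersion at $(a,p_i)$. Under the identifications used in the tangent space computation~\eqref{eq:h^0(N)}, the normal component of $d(ev)_{(a,p_i)}$ reduces on the $T_a\Delta_0$-summand to the evaluation map $H^0(C,\Nc_{F|C})\to\Nc_{F|C,p_i}$, which is surjective because $\Nc_{F|C}\cong\Oo_C(d)$ is globally generated for $d\ge 0$. Hence $ev^{-1}(p_i)$ is smooth of codimension~$2$ near $(a,p_i)$; and since each fiber of $\pi$ meets $ev^{-1}(p_i)$ in at most one point, the restriction of $\pi$ to it is an injective immersion, so its image $I_i$ is a smooth divisor in $\Delta_0$ near $a$. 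A direct tangent space computation then identifies $T_aI_i$ with the hyperplane in $H^0(\Oo_C(d))$ of sections vanishing at $p_i$, so $\bigcap_{i=1}^m T_aI_i=H^0(\Oo_C(d-m))$ has codimension exactly $m$ in $T_a\Delta_0$; this gives transversality at $a$ and shows that $\bigcap_i I_i$ is smooth of dimension $d-m+1$ near $a$.

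To finish I would identify this intersection with $\Do(F,C,S)$ by a dimension count. Near $a$, both $\bigcap_i I_i$ and $\Do(F,C,S)$ have underlying set $\{b\in\Delta_0:S\subset C_b\}$; the smooth submanifold $\bigcap_i I_i$ sits inside $\Do(F,C,S)$, forcing $\dim_a\Do(F,C,S)\ge d-m+1$, while the tangent space calculation already in the excerpt gives the matching upper bound $\dim_a\Do(F,C,S)\le\dim T_a\Do(F,C,S)=d-m+1$. The two bounds coincide, so $\Do(F,C,S)$ is smooth at $a$ of dimension $d-m+1$; the condition $C_b\cong\PP^1$ for $b$ near $a$ is inherited from Proposition~\ref{def1}. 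I expect the main obstacle to be the submersion check in the second paragraph: it is the one place where the positivity of $d$ enters essentially (through global generation of $\Oo_C(d)$), and it requires a careful unwinding of the description of tangent vectors to $\mathcal U$ in terms of sections of the normal bundle.
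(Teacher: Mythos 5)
Your argument is sound in outline but takes a genuinely different route from the paper's. The paper does \emph{not} deduce Proposition~\ref{def2} from Proposition~\ref{def1}: there, \ref{def1} is the special case $m=0$ of \ref{def2}, and both are proved simultaneously by blowing up all $d$ points $p_1,\dots,p_m,q_1,\dots,q_{d-m}$, applying Savelyev's product-neighborhood theorem to the resulting $0$-curve (Lemma~\ref{Sav}) to produce, for each choice of the auxiliary points $q_j$, a one-parameter family in $\Do(F,C,S)$ through~$a$ whose tangent direction spans the line $H^0(\Nc_{F|C}(-S)(-q_1-\dots-q_{d-m}))$; a Veronese-curve argument shows that these directions fill a non-empty Zariski open subset of $\PP(T_a\Do(F,C,S))$ as the $q_j$ vary, smoothness then follows from the blowup criterion of Lemma~\ref{lemma:criterion}, and $C_b\cong\PP^1$ from Lemma~\ref{nearby_fibers}. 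Your route --- cutting the smooth $(d+1)$-dimensional space of \ref{def1} by the $m$ incidence divisors $I_i$ and checking transversality at~$a$ via surjectivity of the evaluation $H^0(C,\Nc_{F|C})\to\Nc_{F|C,p_i}$ --- is the standard incidence-variety argument from the algebraic theory; it is shorter and avoids both the Veronese computation and Lemma~\ref{lemma:criterion}, but it makes \ref{def2} logically dependent on \ref{def1}, which must then be supplied from elsewhere (Kodaira's theorem, as the paper notes, or the paper's own argument run with $m=0$), whereas in the paper the two statements come out of a single induction-free proof. Your observation that $C_b\cong\PP^1$ is inherited from \ref{def1} is correct and replaces the paper's use of Lemma~\ref{nearby_fibers}.

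Two steps need more care than you give them. The submersion computation you flag is indeed the standard identification of the differential of the evaluation map with evaluation of normal sections, checkable on families over $\Spec\C[\eps]/(\eps^2)$; that part is fine. The more serious gloss is the final dimension count: the set-theoretic identity between $\bigcap_{i=1}^m I_i$ and $\{b\in\Delta_0: S\subset C_b\}$ does not by itself yield $\dim_a\Do(F,C,S)\ge d-m+1$, since $\Do(F,C,S)$ carries its own analytic structure as (an open piece of) $\Dou(\Ii_S)$. You must exhibit a holomorphic injection $\bigcap_{i=1}^m I_i\to\Dou(\Ii_S)$: because the base $B=\bigcap_{i=1}^m I_i$ is reduced and every fiber of the restricted family contains~$S$, the ideal sheaf of the family lies inside the ideal sheaf of $B\times S$, so the family is a family of subsheaves of $\Ii_S$ and the universal property of $\Dou(\Ii_S)$ gives the required morphism, injective because its composition with the embedding $\Dou(\Ii_S)\hookrightarrow\Do(F)$ is the inclusion of $B$ into $\Delta_0$. (This is exactly the maneuver the paper performs in Lemma~\ref{Sav} to conclude $\dim_a\Do(F,C)\ge1$.) With that patch, your two bounds $\dim_a\Do(F,C,S)\le\dim T_a\Do(F,C,S)=d-m+1$ and $\dim_a\Do(F,C,S)\ge d-m+1$ combine as claimed, and the proof closes.
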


We begin with a particular case, which is essentially contained
in~\cite{Savelyev} (and which follows from the main result of~\cite{Kodaira}). 

\begin{lemma}\label{Sav}
Proposition~\ref{def1} holds if $d=0$.  
\end{lemma}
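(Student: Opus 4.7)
The plan is to apply a standard smoothness criterion for the Douady space, using the vanishing of $H^1(\PP^1,\Oo)$ to kill obstructions, and then to identify nearby fibers as smooth rational curves by constancy of arithmetic genus in a flat family.

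Since $d=(C\cdot C)=0$, the normal bundle $\Nc_{F|C}\cong\Oo_C$ is trivial, and~\eqref{eq:h^0(N)} gives
\begin{equation*}
T_a\Do(F,C)\cong H^0(\PP^1,\Oo)\cong\C.
\end{equation*}
The obstructions to deforming the subspace $C\subset F$ lie in $H^1(C,\Nc_{F|C})\cong H^1(\PP^1,\Oo)=0$, so by the standard smoothness criterion for Douady spaces~\cite{Douady} (equivalently, the completeness theorem of~\cite{Kodaira}) some neighborhood $\Delta\ni a$ in $\Do(F,C)$ is a smooth $1$-dimensional complex manifold, and the restricted universal family $\Ho(F)|_\Delta\subset\Delta\times F$ is flat over~$\Delta$. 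The same conclusion can be reached via~\cite{Savelyev}, which constructs the $1$-parameter family of deformations of a $\PP^1$ with trivial normal bundle in a smooth surface by hand.

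To identify the nearby fibers, I would shrink $\Delta$ so that $\Ho(F)|_\Delta$ is itself a smooth complex $2$-manifold and $\pi\colon\Ho(F)|_\Delta\to\Delta$ is a proper holomorphic submersion: near each point of $C_a=C$ the subspace $\Ho(F)\subset\Delta\times F$ is locally defined by a single holomorphic equation whose differential along~$C$ is non-zero (because $C$ is smooth in~$F$ and $\Delta$ is smooth of dimension~$1$), so $\Ho(F)$ is smooth of dimension~$2$ there and $\pi$ is a submersion; by compactness of~$C$ this extends, after further shrinking, to a full neighborhood of $\pi^{-1}(a)$. Each $C_b$ is therefore a smooth compact complex curve, and flatness of $\pi$ preserves the arithmetic genus, giving $p_a(C_b)=p_a(C)=0$ and hence $C_b\cong\PP^1$. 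The main obstacle I anticipate is the smoothness of $\Do(F,C)$ at~$a$, since a priori this space is only a Banach analytic space; I would handle it by combining the local finite-dimensional analytic structure recalled in Section~\ref{sec:def:gen} with the obstruction vanishing $H^1(\Nc_{F|C})=0$, which forces the analytic germ at~$a$ to agree with its Zariski tangent space, or alternatively bypass the issue by quoting Savelyev's explicit construction.
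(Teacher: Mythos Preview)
Your argument is correct, but the paper takes a different and somewhat more elementary route. Rather than invoking the general obstruction-theoretic smoothness criterion for Douady spaces (vanishing of $H^1(\Nc_{F|C})$), the paper uses Savelyev's theorem directly: the neighborhood $W\supset C$ is biholomorphic to $D\times C$, and this product structure immediately furnishes a $1$-parameter family of curves in~$F$, hence a morphism $\Phi\colon D\to\Do(F,C)$ that is one-to-one onto its image. This forces $\dim_a\Do(F,C)\ge1$, and since the Zariski tangent space is $1$-dimensional by~\eqref{eq:h^0(N)}, smoothness follows from the equality $\dim_a=\dim T_a$. The identification of nearby fibers with~$\PP^1$ is then automatic, since they literally are the slices $\{z\}\times C$ of the product.

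What each approach buys: your obstruction argument is the standard conceptual one and generalizes immediately to higher~$d$ (and indeed to Proposition~\ref{def2}); the paper instead deliberately avoids citing a smoothness criterion that is not stated explicitly in~\cite{Douady}, preferring a self-contained dimension count. The paper's choice also pays off later, since Savelyev's product decomposition is reused in Section~\ref{sec:Castelnuovo} to rule out $2$-forms of weight~$2$, so establishing it here is economical. Your treatment of the ``nearby fibers are $\PP^1$'' part via submersivity and constancy of genus is more thorough than what the paper writes for this lemma (the paper defers that kind of argument to Lemma~\ref{nearby_fibers}).
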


\begin{proof}
Let $a\in\Do(F,C)$ be the point corresponding to $C\subset F$. Since
$d=0$, one has $\Nc_{F|C}\cong\Oo_C$, so it follows
from~\eqref{eq:h^0(N)} that $\dim T_a\Do(F,C)=1$. But, according to
the main result of~\cite{Savelyev}, there exist a neighborhood
$W\supset C$ and an isomorphism $\ph\colon W\to D\times C$, where $D$
is the unit disk in~\C, such that $\ph(p)=(0,p)$ for any $p\in C$. If
one puts
\begin{equation*}
\Gf=\{(z,x)\in D\times F\colon x\in W,\ \pr_1(\ph(x))=z\},  
\end{equation*}
then the family \Gf induces a morphism $\Phi\colon D\to\Do(F,C)$ such
that $\Phi(0)=a$ (the point corresponding to~$C$) and $\Phi$ is 1--1
onto its image. Hence, $\dim_a\Do(F,C)\ge 1$. Since $\dim
T_a\Do(F,C)=1$, one concludes that $\Do(F,C)$ is a smooth
$1$-dimensional complex manifold in a neighborhood of~$a$.
\end{proof}

To prove Proposition~\ref{def2} in full generality, we will need two
simple lemmas. 

\begin{lemma}\label{nearby_fibers}
If $p\colon \Ho\to D$, where $D$ is the unit disk in~\C,
is a proper and flat morphism of analytic spaces, and if the fiber
$p^{-1}(0)$ is reduced and isomorphic to~$\PP^1$, then there exists an
$\eps\in(0;1)$ such that the fiber $f^{-1}(a)$ is also reduced and
isomorphic to $\PP^1$ whenever~$|a|<\eps$.
\end{lemma}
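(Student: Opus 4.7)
The plan is to upgrade $p$ from a proper flat morphism to a proper holomorphic submersion between complex manifolds on a neighborhood of the central fiber, and then to read off the structure of nearby fibers via Ehresmann's fibration theorem together with uniformization of the $2$-sphere.

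First I would verify that $\Ho$ is a complex manifold along $p^{-1}(0)$ and that $p$ is a submersion at every point of $p^{-1}(0)$. Flatness over the smooth one-dimensional base~$D$ combined with smoothness of the fiber $p^{-1}(0)\cong\PP^1$ forces each local ring $\Oo_{\Ho,x}$ for $x\in p^{-1}(0)$ to be regular of dimension~$2$ (the classical criterion: a flat local homomorphism with regular source and regular closed fiber has regular target). Consequently $dp_x$ has full rank at every such point. Next I would invoke properness of $p$: the closed analytic subset $Z\subset\Ho$ on which $p$ fails to be a submersion has closed image $p(Z)\subset D$, and this image misses~$0$, so some disc $\{|t|<\eps\}$ is disjoint from it. The restriction of $p$ to $p^{-1}(\{|t|<\eps\})$ is then a proper holomorphic submersion between complex manifolds over a contractible base.

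At this point Ehresmann's theorem tells us that $p$ is $C^\infty$-locally trivial over $\{|t|<\eps\}$, hence in fact globally trivial since the base is contractible. Every fiber is therefore a smooth (in particular reduced) compact Riemann surface diffeomorphic to $p^{-1}(0)\cong S^2$. Any such Riemann surface has genus~$0$ and is consequently biholomorphic to~$\PP^1$, which is the desired conclusion.

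The hard part will not be deep: the only thing requiring some care is the flatness-plus-regularity criterion in the analytic category, but since $\Oo_{D,0}\cong\C\{t\}$ and $\Oo_{\Ho,x}$ are Noetherian local rings the standard commutative-algebra statement applies verbatim. No step of the plan presents a genuine obstacle.
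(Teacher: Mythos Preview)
Your proposal is correct and follows essentially the same route as the paper's own (sketch of) proof: reduce to a proper holomorphic submersion near the central fiber, apply Ehresmann to get a topologically locally trivial bundle, and conclude that nearby fibers are compact Riemann surfaces homeomorphic to~$S^2$, hence biholomorphic to~$\PP^1$. You have simply made explicit the flatness/regularity and properness steps that the paper absorbs into the phrase ``it is easy to see''.
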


\begin{proof}[Sketch of proof]
It is easy to see that there exists an $\eps>0$ such that
$p^{-1}(D_\eps)\to D_\eps$ is a proper submersion of complex
manifolds. Hence, topologically it is a locally trivial bundle, so all
the fibers are homeomorphic to $S^2$, whence the
result.  
\end{proof}

\begin{lemma}\label{lemma:criterion}
  Suppose that $X$ is an analytic space, $a\in X$, and $\dim
  T_aX=n$. Then the following two assertions are equivalent.

\textup{(1)} $X$ is a smooth $n$-dimensional complex manifold in a
neighborhood of the point~$a$.

\textup{(2)} There exists a non-empty Zariski open subset
$V\subset\PP(T_aX)$ such that for any $1$-dimensional linear subspace
$\ell\subset T_aX$ corresponding to a point of~$V$ there exists a
smooth $1$-dimensional locally closed complex submanifold $Y\subset X$
such that $Y\ni a$ and $T_aY=\ell\subset T_aX$.
\end{lemma}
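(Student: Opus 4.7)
The plan is to handle the two implications separately. The direction $(1)\Rightarrow(2)$ is essentially free: assuming $X$ is smooth of dimension~$n$ near~$a$, I choose local holomorphic coordinates centered at~$a$, and for any line $\ell\subset T_aX$ the corresponding straight-line parametrization yields a smooth curve through~$a$ with tangent~$\ell$; so one can take $V=\PP(T_aX)$ itself.

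For the substantive direction $(2)\Rightarrow(1)$, I would first use the hypothesis $\dim T_aX=n$ to locally embed $X$ as a closed analytic subspace of an open neighborhood of~$0$ in~$\C^n$, sending~$a$ to~$0$, in such a way that $T_0X$ coincides with the whole tangent space of~$\C^n$ at~$0$. (The embedding dimension of an analytic space at~$a$ equals $\dim_\C\m_a/\m_a^2=\dim T_aX$.) Let $I\subset\Oo_{\C^n,0}$ be the ideal of~$X$; the condition $T_0X=T_0\C^n$ translates precisely to $I\subset\m^2$, where $\m$ is the maximal ideal of $\Oo_{\C^n,0}$. It suffices to show that $I=0$ as a germ, because then $X$ coincides with $\C^n$ near~$0$ and is smooth of dimension~$n$.

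The core of the argument is leading-term extraction. Suppose for contradiction that $I$ contains a non-zero element~$f$, and write $f=\sum_{j\ge k}f_j$ in homogeneous components with $f_k\ne 0$ (so $k\ge 2$, since $I\subset\m^2$). For each $\ell\in V$, I parametrize the curve $Y_\ell\subset X$ by a holomorphic map $\gamma_\ell\colon(D,0)\to(\C^n,0)$ with image in~$X$ and $\gamma_\ell'(0)=v_\ell$ spanning~$\ell$. Since $f$ vanishes on $Y_\ell$, the composition $f\circ\gamma_\ell$ is identically zero; expanding with $\gamma_\ell(t)=tv_\ell+O(t^2)$ and using the homogeneity of each~$f_j$, one gets $f_j(\gamma_\ell(t))=t^jf_j(v_\ell)+O(t^{j+1})$, so the coefficient of~$t^k$ in $f\circ\gamma_\ell$ equals $f_k(v_\ell)$. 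Hence $f_k(v_\ell)=0$ for every $\ell\in V$, i.e.\ the homogeneous polynomial~$f_k$ vanishes on the affine cone over~$V$ in~$\C^n$. Since $V\subset\PP^{n-1}$ is non-empty Zariski open, this cone is Zariski dense in~$\C^n$, forcing $f_k\equiv 0$, a contradiction.

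The only step that is not purely formal is setting up the local embedding $X\hookrightarrow\C^n$ with $I\subset\m^2$, which is what makes the leading-term extraction produce a non-trivial constraint on~$f_k$; once that is in place, the Taylor expansion and the Zariski density of~$V$ take care of the rest.
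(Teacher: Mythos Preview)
Your proof is correct, and it takes a genuinely different route from the paper's.

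The paper argues via blowup: it embeds $X$ in a polydisc $D\subset\C^N$ (for some~$N$, not necessarily minimal), blows up $D$ at~$a$, and looks at the intersection of the strict transform $\bar X$ of $X_{\mathrm{red}}$ with the exceptional divisor $E\cong\PP^{N-1}$. This intersection sits inside $\PP(T_aX)\subset E$ and has dimension $\dim_aX-1$. Each smooth curve $Y\subset X$ through~$a$ with tangent~$\ell$ contributes the point $[\ell]$ to $\bar X\cap E$; hypothesis~(2) then forces $\bar X\cap E$ to contain a non-empty Zariski open subset of $\PP(T_aX)$, hence to equal $\PP(T_aX)$, whence $\dim_aX=n$.

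Your argument instead uses the \emph{minimal} embedding $X\hookrightarrow\C^n$ (so that the ideal $I$ satisfies $I\subset\m^2$) and extracts the lowest-order homogeneous form of a hypothetical nonzero $f\in I$, killing it by evaluating along the curves~$Y_\ell$. The two approaches are cousins---your initial-form computation is the algebraic shadow of the paper's tangent-cone computation via blowup---but yours is more elementary and self-contained, avoiding strict transforms and the fact that $\dim(\bar X\cap E)=\dim_aX-1$. The paper's approach, on the other hand, does not need the minimal embedding and makes the geometric picture (tangent directions realized by curves fill up the projectivized tangent cone) more visible.
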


\begin{proof}
Only the implication $(2)\Rightarrow (1)$ deserves a proof.

Observe that $X$ is a complex manifold near~$a$ if and only if
$\dim_aX=n=\dim T_aX$. Furthermore, 
the question being local, we may and will assume that $X$ is a closed
analytic subspace of a polydisc $D\subset \C^N$. Let $\bar D$ be the
blowup of $D$ at~$a$, and let $\bar X$ be the strict transform
of~$X_{\mathrm{red}}$. If $\sigma\colon \bar D\to D$ is the blowdown
morphism and $E=\sigma^{-1}(a)$ is the exceptional divisor, then
$\bar X\cap E$ is a projective submanifold of~$E\cong\PP^{N-1}$,
$\dim\bar X\cap E=\dim_aX-1$, and $\bar X\cap E\subset\PP(T_aX)\subset
E$.

Now if $Y\subset X$ is a locally closed $1$-dimensional complex
submanifold such that $Y\ni a$ and if $\ell=T_aY\subset T_aX$, then
the point of~$\PP(T_a(X))$ corresponding to $\ell$ belongs to $\bar
X\cap E$; thus, it follows from (2) that $\bar X\cap E$ contains a
non-empty Zariski open subset of~$\PP(T_aX)$, hence $\bar X\cap
E=\PP(T_aX)$, hence $\dim_aX=n$, and we are done.
\end{proof}

\begin{proof}[Proof of Proposition~\ref{def2}]
Choose $d-m$ distinct points $\lst q{d-m}\in C\setminus S$. Let $\bar
F$ be the blowup of $F$ at the points $\lst pm,\lst q{d-m}$, let
$\sigma\colon \bar F\to F$ be the corresponding blowdown morphism, and
let $\bar C\subset \bar F$ be the strict transform of~$C$. One has
$\bar C\cong\PP^1$ and $(\bar C,\bar C)=0$. Let $\bar a\in \Do(\bar
F,\bar C)$ be the point corresponding to~$\bar C$, and let
$a\in\Do(F,C,S)$ be the point corresponding to~$C$.

Applying
Lemma~\ref{Sav} to the pair $(\bar F,\bar C)$, one concludes that
there exists a family $\bar\Ho_0\subset D\times \bar F$, where $D$ is the
unit disk in the complex plane, such that its fiber over~$0$ is $\bar
C\subset\bar F$ and, for the induced mapping $\bar\ph\colon D\to \Do(\bar
F,\bar C)$, its derivative $D\bar\ph(0)\colon T_0D\to T_{\bar
  a}\Do(\bar F,\bar C)$ is non-degenerate.

If we put
$\Ho_0=(\mathrm{id}\times\sigma)(\bar \Ho_0)\subset D\times F$,  
then $\Ho_0$ is a family of analytic subspaces in~$F$ containing~$S$;
its fiber over~$0$ is~$C$. Let $\ph\colon D\to \Do(F,C,S)$ be the
mapping induced by this family.

It is clear that the diagram
\begin{equation*}
  \xymatrix{
    &{T_{\bar a}\Do(\bar F,\bar C)}   \ar@{=}[r]
  &{H^0(\Nc_{\bar F|\bar C})}   \ar[dd]^{D\sigma}
  \\
    {T_0D} \ar[ur]^{D\bar\ph(0)}\ar[dr]_{D\ph(0)}\ar[urr]_{\bar\alpha}
                                        \ar[drr]^{\alpha}
    \\
     & {T_{a}\Do(F,C,S)} \ar@{=}[r]
     &{H^0(\Nc_{F|C}(-S)),}
}    
\end{equation*}
where the vertical arrow is induced by the natural homomorphism
$\Nc_{\bar F|\bar C}\to \sigma^*\Nc_{F|C}$, is commutative. It follows
from (the proof of) Lemma~\ref{Sav} that
$\bar\alpha(\partial/\partial z)$, where $z$ is the coordinate on~$D$,
is a nowhere vanishing section of $\Nc_{\bar F|\bar C}\cong\Oo_{\bar
  C}$; since the derivative of the mapping~$\sigma$ is non-degenerate
outside $\sigma^{-1}\{\lst pm,\lst q{d-m}\}$, the section
$\alpha(\partial/\partial z)=D\sigma(\bar\alpha (\partial/\partial
z))$ is not identically zero. Hence, $\ph$ induces an embedding of a
possibly smaller disk $D_\eps\subset D$ in $\Do(F,C,S)$.

Moreover, since $\sigma$ maps each of the curves $\sigma^{-1}(p_i)$,
$\sigma^{-1}(q_j)$ to a point, and since each of these curves is
transverse to $\bar C$, the section $\alpha(\partial/\partial z)$
vanishes at $\lst pm,\lst q{d-m}$, so $\alpha(\partial/\partial z)$
spans the $1$-dimensional linear space
\begin{equation}\label{eq:1-dim}
  H^0(\Nc_{F|C}(-S)(-q_1-\dots-q_{d-m}))\subset H^0(\Nc_{F|C}(-S))
  =T_a\Do(F,C,S).
\end{equation}

In the argument that follows we will assume that $d-m\ge2$, so that the
words about Veronese curves in $\PP^d$ and $\PP^{d-m}$ make
sense; we leave it to the reader to modify the wording for the case
$d-m=1$. 

Keeping the above in mind, identify $C$ with $\PP^1$ and $\Nc_{F|C}$
with $\Oo_{\PP^1}(d)=\Oo_C(d)$, embed $C$ in $\PP^d$ with the complete
linear system $|\Oo_C(d)|$ to obtain a Veronese curve
$C_d\subset\PP^d$, and project $C_d$ from $\PP^d$ to $\PP^{d-m}$, the
center of projection being the linear span of the images of the points
\lst pm. The image of this projection will be a Veronese curve
$C_{d-m}\subset \PP^{d-m}$; denote the resulting isomorphism between
$C$ and $C_{d-m}$ by $\ph\colon C\to C_{d-m}$.

One has
\begin{equation*}
\PP^{d-m}=\PP((H^0(\Nc_{F|C}(-S))^*);
\end{equation*}
for any $d-m$ distinct points $\lst q{d-m}\in C\setminus S$ the
points $\ph(q_1),\dots,\ph(q_{d-m})$ span a hyperplane in $\PP^{d-m}$, and
the linear span $\langle \ph(q_1),\dots,\ph(q_{d-m})\rangle$ is the
projectivization of
\begin{equation*}
\Ann (H^0(\Nc_{F|C}(-S)(-q_1-\dots-q_{d-m}))) 
\subset H^0(\Nc_{F|C}(-S))^*.  
\end{equation*}

Now the hyperplanes in $\PP^{d-m}$ that are transverse to $C_{d-m}$
form a non-empty Zariski open subset in
$(\PP^{d-m})^*=\PP(H^0(\Nc_{F|C}(-S)))$ and any such hyperplane
intersects $C_{d-m}$ at $d-m$ distinct points that are of the form
$\ph(q_1),\dots, \ph(q_{d-m})$, where $\lst q{d-m}\in C\setminus S$. Thus, the
linear subspaces of the form~\eqref{eq:1-dim} fill a non-empty Zariski open
subset in $\PP(T_a\Do(F,C,S))$  as we vary $\lst q{d-m}\in C\setminus
S$, so the hypotheses of Lemma~\ref{lemma:criterion} are satisfied if one
puts $Y=\ph(D)$, hence
the smoothness is established. Now the assertion to the effect that
the fibers are isomorphic to $\PP^1$ follows from
Lemma~\ref{nearby_fibers}. 
\end{proof}

\section{Good neighborhoods}\label{sec:good}

Suppose that $F$ is a smooth and connected complex surface and
$C\subset F$ is a curve that is isomorphic to $\PP^1$ and~$(C\cdot
C)\ge 0$. In the definition below we use Notation~\ref{not:C_a}.

\begin{definition}\label{def:good}
 We will say that an open subset $W\subset F$, $W\supset C$ is a
 \emph{good neighborhood of~$C$} if there exists a connected open
 subset $\Delta\subset \Do(F,C)$ such that $W=\bigcup_{b\in\Delta}C_b$
 and each $C_b$ for $b\in\Delta$ is a smooth curve isomorphic
 to~$\PP^1$.
\end{definition}

\begin{proposition}\label{prop:good_neighb}
In the above setting, there exists a fundamental system of good
neighborhoods of~$C$.  
\end{proposition}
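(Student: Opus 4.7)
The plan is to produce, for each open $U\supset C$, a connected open $\Delta'\subset\Do(F,C)$ containing the point $a$ (corresponding to $C$) whose associated family sweeps out an open subset of $F$ contained in~$U$. The non-trivial content will be the openness of this swept-out set; everything else is a properness/continuity argument.

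First, I would apply Proposition~\ref{def1} to obtain a neighborhood $\Delta_0\ni a$ in $\Do(F,C)$ that is a smooth $(d+1)$-dimensional complex manifold and such that $C_b\cong\PP^1$ for every $b\in\Delta_0$. Let $\Ho_0\subset\Delta_0\times F$ be the restriction of the universal family, with projections $\pi_1\colon\Ho_0\to\Delta_0$ (proper and flat, hence a proper submersion with $\PP^1$-fibers) and $\pi_2\colon\Ho_0\to F$.

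The key step is to show that $\pi_2$ is a submersion at every point $(a,x)$ with $x\in C$. To see this, observe that $T_{(a,x)}\Ho_0$ fits inside $T_a\Delta_0\oplus T_xF$, projects surjectively onto $T_a\Delta_0=H^0(\Nc_{F|C})$ (because $\pi_1$ is a submersion), and contains $\{0\}\oplus T_xC$. A tangent vector $(v,w)$ lies in $T_{(a,x)}\Ho_0$ exactly when the image of $w$ in $\Nc_{F|C,x}$ equals the value of the Kodaira--Spencer class $v\in H^0(\Nc_{F|C})$ at~$x$. Since $d\ge 0$, the evaluation $H^0(\Oo_C(d))\to\Oo_C(d)|_x$ is surjective, so given any $w\in T_xF$ one can solve for $v$; hence $d\pi_2|_{(a,x)}$ hits all of $T_xF$. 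Therefore $\pi_2$ is a submersion on some open neighborhood $V\subset\Ho_0$ of the compact set $\{a\}\times C$.

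Now let $U\supset C$ be any open set. The locus
\begin{equation*}
\Delta_1=\{b\in\Delta_0\colon \pi_1^{-1}(b)\subset V\text{ and }C_b\subset U\}
\end{equation*}
is open in $\Delta_0$: both conditions are of the form ``$\pi_1^{-1}(b)$ is disjoint from a given closed subset of $\Ho_0$'', and since $\pi_1$ is proper the image of a closed set is closed. Clearly $a\in\Delta_1$. Pick a connected open neighborhood $\Delta'$ of $a$ inside $\Delta_1$ and set $W=\bigcup_{b\in\Delta'}C_b=\pi_2(\pi_1^{-1}(\Delta'))$. Because $\pi_2$ restricted to $\pi_1^{-1}(\Delta')\subset V$ is a submersion of complex manifolds, it is an open map, so $W$ is open in~$F$; by construction $C\subset W\subset U$ and every fiber $C_b$ for $b\in\Delta'$ is a smooth $\PP^1$. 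Thus $W$ is a good neighborhood of $C$ contained in $U$, and as $U$ was arbitrary we obtain a fundamental system.

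The main obstacle is the openness of $W$: a union of compact curves parametrized by an open set need not be open (consider a trivial family). What saves us is precisely the global generation of $\Nc_{F|C}\cong\Oo_C(d)$ with $d\ge 0$, which feeds into the Kodaira--Spencer computation and forces $\pi_2$ to be a submersion near $\{a\}\times C$.
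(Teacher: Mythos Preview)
Your proof is correct, but the route you take to establish that $\pi_2$ is a submersion is genuinely different from the paper's.

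The paper proves submersivity of $q\colon\Ho\to F$ at \emph{every} point $(b,x)\in\Ho$, not just along the central fiber. Its mechanism is geometric: given $(b,x)$, it fixes $d$ further points of $C_b$ to cut the Douady space down, via Proposition~\ref{def2}, to a smooth $1$-dimensional family $\Delta_0\subset\Do(F,C_b,S)$; then an intersection-number count ($C_\gamma\cap C_{\gamma_1}$ cannot contain $d+1$ points) forces the restricted projection $q_0$ to be injective near $(\beta,x)$, hence a local biholomorphism, and the surjectivity of $Dq(b,x)$ follows from the commutative diagram~\eqref{diag}. Your argument instead works only at points $(a,x)$ with $x\in C$, and there you read off surjectivity of $D\pi_2$ directly from the Kodaira--Spencer description of $T_{(a,x)}\Ho_0$ together with the global generation of $\Nc_{F|C}\cong\Oo_C(d)$; you then recover the full statement by openness of the submersion locus and a properness argument to shrink~$\Delta_0$.

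Your approach is more elementary in that it bypasses Proposition~\ref{def2} entirely and needs no intersection-theoretic bookkeeping; the price is that you must invoke the identification of $T_{(a,x)}\Ho_0$ with $\{(v,w):[w]=v(x)\text{ in }\Nc_{F|C,x}\}$, which is standard but not derived in the paper. The paper's approach, by contrast, yields the slightly stronger conclusion that $q$ is submersive on all of~$\Ho$ (not only near the central fiber), though this extra strength is not actually used in the proof of Proposition~\ref{prop:good_neighb}.
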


\begin{proof}
If $d=0$, it follows immediately from Lemma~\ref{Sav}. Suppose that
$d>0$. 

Let $a\in\Do(F)$ be the point corresponding to $C$, and let $\Delta\ni
a$, $\Delta\subset \Do(F,C)$ be the neighborhood whose existence is
asserted by Proposition~\ref{def1}. We denote by
$\Ho\subset\Delta\times F$ the family of analytic subspaces of~$F$
induced by the embedding $\Delta\hookrightarrow \Do(F,C)$ (informally
speaking, $\Ho=\{(b,x)\in\Delta\times F\colon x\in C_b\}$). Since all
the $C_b$'s are smooth $1$-dimensional complex submanifolds of~$F$ and
the base $\Delta$ is a smooth complex manifold as well, \Ho is a
smooth complex manifold.

  I claim that the projection $q\colon \Ho\to F$ is a submersion; once
  we have established this fact, it will follow that $q(\Ho)\subset F$
  is a good neighborhood of~$C$.

To check this submersivity, consider an arbitrary point $(b,x)\in\Ho$
(i.e., $x\in C_b)$; we are to show that the derivative $Dq(b,x)\colon
T_{(b,x)}\Ho \to T_xF$ is surjective. To that end, pick $d$ distinct
point $\lst pd\in C_b\setminus\{x\}$, where $d=(C\cdot C)=(C_b\cdot
C_b)$, and put $\{\lst pd\}=S$.

Let $i\colon \Do(F,C,S)\hookrightarrow \Do(F,C)$ be
the natural embedding, and let $\beta\in\Do(F,C,S)$ be the point
corresponding to the curve $C_b$  (so $C_\beta$ and $C_b$ are the same
curve in~$F$, and $i(\beta)=b$).

Let $\Delta_0\subset \Do(F,C_b,S)$,
$\Delta_0\ni\beta$ be a neighborhood whose existence is asserted by
Proposition~\ref{def2};
we may and will assume that $\Delta_0\subset
i^{-1}(\Delta)$. Finally, let $\Ho_0\subset \Delta_0\times F$ be the
family of analytic subspaces of $F$ (containing~$S$) induced by the
inclusion $\Delta_0\hookrightarrow \Do(F,C,S)$, and let $q_0\colon
\Ho_0\to F$ be the projection. The inclusion $\Delta_0\subset
\Do(F,C)$ induces an inclusion $\Ho_0\to\Ho$, and one has the
following obvious commutative diagram:
\begin{equation}\label{diag}
  \xymatrix{
    &&F\\
    {\Ho_0}\ar@{^{(}->}[r]\ar[rru]^{q_0}\ar[d]&{\Ho}\ar[ru]_q\ar[d]\\
    {\Delta_0}\ar@{^{(}->}[r]^i&\Delta
 }   
\end{equation}

Observe that if $c\in i(\Delta_0)\subset \Delta$ and $y\in
C_c\setminus S$ then there exists a unique $\gamma\in \Delta_0$ such
that $y\in C_\gamma$. Indeed, if $y\in C_\gamma\cap C_{\gamma_1}$,
$\gamma,\gamma_1\in\Delta_0$, then $C_\gamma\cap C_{\gamma_1}\supset
\{y\}\cup S$, whence $(C_\gamma\cdot C_{\gamma_1})\geqslant d+1$, which
contradicts the fact that
\begin{equation*}
(C_\gamma\cdot C_{\gamma_1})=(C\cdot C)=d.  
\end{equation*}
Hence, there exists a neighborhood $\Vv\ni(\beta,x)$ in $\Ho_0$ such
that the restriction of $q_0$ to \Vv is 1--1 onto its image.

Since, according to Proposition~\ref{def2}, the Douady space $\Do(F,C_b,S)$ is
1-dimensional and smooth in a neighborhood of the point $\beta$ corresponding
to~$C_b\subset F$, and since a holomorphic mapping of complex
manifolds of the same dimension that is 1--1 onto its image is an open
embedding, it follows now that $q_0(\Vv)$ is open in~$F$ and
$q_0|_\Vv\colon \Vv\to q_0(\Vv)$ is a biholomorphism. In particular,
$Dq_0(\beta,x)\colon T_{(\beta,x)}\Ho_0\to T_xF$ is an isomorphism. Now
it follows from the diagram~\eqref{diag}
that $Dq(b,x)\colon T_{(b,x)}\Ho \to T_xF$ is a surjection.

This proves the submersivity of the projection $q\colon\Ho\to F$, so
$q(\Ho)\subset F$ is a good neighborhood of~$C$, and, for any open and
connected $\Delta'\subset\Delta$, $\Delta'\ni a$, the set
$q(p^{-1}(\Delta')) \supset C$, where $p\colon \Ho\to\Delta$ is the
projection, is a good neighborhood of~$C$ as well. The neighborhoods
$q(p^{-1}(\Delta'))$, for various such~$\Delta'$, form a fundamental
system of good neighborhoods of~$C$.
\end{proof}

\begin{proposition}\label{transverse}
Suppose that $C\subset F$, where $F$ is a smooth complex surface, is a
curve that is isomorphic to $\PP^1$, and that $(C\cdot C)=d>0$. If
$W\subset F$  is a good neighborhood of~$C$ in the sense of
Definition~\ref{def:good}, then for any $x\in W$ there exist two
curves $C_1,C_2\ni x$, $C_1,C_2\subset W$ such that
$C_1\cong C_2\cong\PP^1$ and the curves $C_1$ and $C_2$ are
transverse at~$x$.
\end{proposition}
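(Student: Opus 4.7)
The plan is to take $C_1$ to be a curve in the good-neighborhood family passing through $x$, and produce $C_2$ as a nearby deformation of $C_1$ which still contains $x$ but has a different tangent line there. Since $W=\bigcup_{b'\in\Delta}C_{b'}$ for some connected open $\Delta\subset\Do(F,C)$, pick $b\in\Delta$ with $x\in C_b$ and set $C_1:=C_b$. Applying Proposition~\ref{def2} to the pair $(F,C_b)$ with $S=\{x\}$, I obtain a neighborhood $\Delta_0\subset\Do(F,C_b,\{x\})$ of the point $\beta$ corresponding to $C_b$ which is smooth of complex dimension~$d$ and parametrizes a family of smooth rational curves all passing through~$x$. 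Under the natural embedding $\Do(F,C_b,\{x\})\hookrightarrow\Do(F,C)$ I may shrink $\Delta_0$ so that its image lies in~$\Delta$, which guarantees $C_{\beta'}\subset W$ for every $\beta'\in\Delta_0$.

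The assignment $\beta'\mapsto T_xC_{\beta'}$ defines a holomorphic map $\tau\colon\Delta_0\to\PP(T_xF)$, and it suffices to show that $\tau$ is non-constant in any neighborhood of~$\beta$: then $C_2:=C_{\beta'}$ for some $\beta'$ close to~$\beta$ with $\tau(\beta')\ne\tau(\beta)$ will be a smooth rational curve in $W$ passing through $x$ and transverse to $C_1$ there. I claim that $D\tau(\beta)$ is already surjective. By the general theory recalled in Section~\ref{sec:def:gen},
\begin{equation*}
T_\beta\Delta_0 \cong H^0(\Nc_{F|C_b}(-x)) \cong H^0(\Oo_{\PP^1}(d-1)),
\end{equation*}
while $T_{T_xC_b}\PP(T_xF)\cong\Hom(T_xC_b,\Nc_{F|C_b}|_x)$ is $1$-dimensional. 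Under these identifications, $D\tau(\beta)$ is the first-jet-at-$x$ map sending a normal section $s$ vanishing at $x$ to $(ds)_x\in\Nc_{F|C_b}|_x\otimes T_x^*C_b$. Its kernel is $H^0(\Nc_{F|C_b}(-2x))\cong H^0(\Oo_{\PP^1}(d-2))$, of dimension $\max(0,d-1)$, so the image has dimension $d-\max(0,d-1)=1$ for every $d\ge 1$, matching the dimension of the target.

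The main obstacle is justifying that $D\tau(\beta)$ really is the jet-evaluation map. This is a standard piece of Kodaira--Spencer theory: one writes nearby curves as images of holomorphic maps $\phi_t\colon\PP^1\to F$ with $\phi_t(z_0)=x$ for a fixed~$z_0$, identifies the normal projection of $\partial_t\phi_t|_{t=0}$ with the tangent vector in $H^0(\Nc_{F|C_b}(-x))$ giving the deformation, and reads off the infinitesimal tilt of $T_xC_t\subset T_xF$ as the first $z$-derivative of this normal section at~$z_0$; since taking derivatives commutes with the projection to the normal bundle, this derivative is exactly $(ds)_x$.
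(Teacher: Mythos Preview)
Your argument is correct, but the paper's proof is considerably more elementary. Instead of fixing only $S=\{x\}$, the paper fixes $d$ points $S=\{x,p_1,\dots,p_{d-1}\}\subset C_b$; Proposition~\ref{def2} then gives a $1$-dimensional family $\Delta_0$ of smooth rational curves through all of~$S$, and one takes $C_1=C_{\gamma_1}$, $C_2=C_{\gamma_2}$ for any two distinct $\gamma_1,\gamma_2\in\Delta_0$. Transversality at $x$ follows by pure intersection-number bookkeeping: if $C_1$ and $C_2$ were tangent at~$x$, the local intersection multiplicity there would be $\ge 2$, so $(C_1\cdot C_2)\ge (d-1)+2=d+1$, contradicting $(C_1\cdot C_2)=(C\cdot C)=d$. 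No tangent-map $\tau$, no Kodaira--Spencer identification, no jet computation is needed.

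Your route is natural and has the merit of only imposing the incidence condition you actually want (passing through~$x$), but the price is the ``main obstacle'' you flag: one must verify that $D\tau(\beta)$ is the first-jet-at-$x$ map, which you do correctly via a local-coordinate computation but which requires a paragraph of justification. The paper's trick of saturating the remaining freedom with $d-1$ auxiliary base points converts the transversality question into a one-line numerical contradiction, at the cost of a slightly less canonical choice of family.
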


\begin{proof}
Since $W$ is a good neighborhood, there exists a curve $C_b\ni x$,
where $b\in\Delta$. Pick $d-1$ distinct points $\lst p{d-1}\in
C_b\setminus\{x\}$ and put $S=\{x,\lst p{d-1}\}$. According to
Proposition~\ref{def2}, one has $\dim_\beta\Do(F,C_b,S)=1$, where $\beta$
is the point of $\Do(F,C_b,S)$ corresponding to~$C_b$. Let
$\Delta_0\ni\beta$, $\Delta_0\subset \Do(F,C_b,S)$ be the neighborhood
whose existence is asserted by Proposition~\ref{def2}, and let
$\gamma_1,\gamma_2\in\Delta_0$ be two distinct points. I claim that
the curves $C_1:=C_{\gamma_1}$ and
$C_2:=C_{\gamma_2}$ are transverse at~$x$. Indeed, if this not the case,
then the local intersection index of $C_{1}$ and $C_{2}$
at~$x$ is at least~$2$, whence
\begin{equation*}
(C\cdot C)=(C_{1}\cdot C_{2})  \ge d-1+2\ge d+1,
\end{equation*}
contrary to the fact that $(C\cdot C)=d$. This contradiction proves
the required transversality.
\end{proof}

\section{Transcendence degree 2}\label{sec:Castelnuovo}

In this section we prove 
Proposition~\ref{main}.
We begin with two simple observations.

\begin{proposition}\label{fin.gen}
If $F$ is a smooth connected complex surface that contains a
curve $C\subset\PP^1$ such that $(C\cdot C)>0$, then the field of
meromorphic functions $\M(F)$ is finitely generated over~\C and
$\trdeg_\C\M(F)\le 2$.
\end{proposition}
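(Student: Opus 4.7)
The plan is to realize a neighborhood of~$C$ in~$F$ as a strongly pseudoconcave complex space and to invoke Andreotti's finiteness theorem for meromorphic functions on such spaces. Since $d=(C\cdot C)>0$, the normal bundle $\Nc_{C|F}\cong\Oo_C(d)$ is ample, which is precisely the positivity needed to produce such a neighborhood.

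First I would exhibit an open neighborhood $U\supset C$ in~$F$ that is strongly pseudoconcave in the sense of Andreotti. Starting from a positive Hermitian metric on~$\Nc_{C|F}$, one gets a strictly plurisubharmonic norm function on the total space of $\Nc_{C|F}^{\vee}$ whose small sublevel sets have strongly pseudoconvex complements, hence are pseudoconcave. Transferring this picture to a tubular neighborhood of~$C$ in~$F$ is classical: although a tubular neighborhood agrees with a neighborhood of the zero section in $\Nc_{C|F}$ only to first order, the ampleness of $\Nc_{C|F}$ dominates the higher-order corrections in the Levi-form estimate. Second, I would apply Andreotti's theorem on algebraic dependence: on any irreducible strongly pseudoconcave complex space of dimension~$n$, the field of meromorphic functions is a finitely generated extension of~\C of transcendence degree at most~$n$. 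Applied to~$U$, which has dimension~$2$, this gives that $\M(U)$ is finitely generated over~\C with $\trdeg_\C\M(U)\le 2$.

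Third, since $F$ is connected and $U$ is a non-empty open subset, restriction defines an injective homomorphism $\M(F)\hookrightarrow\M(U)$, so $\M(F)$ is an intermediate field between~\C and~$\M(U)$. In particular $\trdeg_\C\M(F)\le 2$. By Artin's theorem that every intermediate field of a finitely generated field extension is itself finitely generated over the base field, $\M(F)$ is then also finitely generated over~\C, as required.

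The main obstacle is the pseudoconcavity step. The Levi-form calculation is routine when one works directly in the total space of an ample line bundle, but justifying pseudoconcavity in the actual surface~$F$—where the embedding of~$C$ differs from the zero-section embedding by higher-order terms—requires an estimate that uses the ampleness of $\Nc_{C|F}$ to absorb those terms. In practice one would appeal to the classical literature on neighborhoods of curves with positive normal bundle (Grauert, Andreotti--Tomassini, Hironaka--Matsumura) rather than redoing the analysis here.
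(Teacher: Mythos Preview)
Your proposal is correct and follows essentially the same three-step argument as the paper: produce a pseudoconcave neighborhood $U\supset C$ from the positivity of $\Nc_{C|F}$, apply Andreotti's theorem to bound $\M(U)$, and then pass to the subfield $\M(F)\hookrightarrow\M(U)$ via the standard fact that intermediate extensions of finitely generated extensions are finitely generated. The only difference is that where you sketch the Levi-form estimate and point to Grauert/Andreotti--Tomassini, the paper simply cites Orevkov's Theorem~2.1 for the existence of the pseudoconcave neighborhood.
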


\begin{proof}
Theorem~2.1 from~\cite{Orevkov} asserts that there exists a
pseudoconcave neighborhood $U\supset C$. According to
\cite[Th\'eor\`eme 5]{Andreotti}, the field $\M(U)$ is finitely
generated over~\C and $\trdeg_\C\M(U)\le 2$. Observe that $\M(F)$ embeds in
$\M(U)$ as an extension of~\C; since any sub-extension of a finitely
generated extension of fields is also finitely generated and the transcendence
degree is additive in towers, we are done.
\end{proof}

\begin{lemma}\label{lemma:Lueroth}
  Suppose that $F$ is a connected complex surface such that
  $\trdeg_\C\M(F)\ge 2$. If there exists a connected open set $U\subset F$
  such that $\M(U)\cong \C(T_1,T_2)$, then $\M(F)\cong \C(T_1,T_2)$.
\end{lemma}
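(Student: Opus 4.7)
The plan is to realize $\M(F)$ as a subfield of $\C(T_1,T_2)$ via restriction to $U$ and then to invoke Castelnuovo's rationality theorem in its field-theoretic form.

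First, consider the restriction map $\rho\colon \M(F)\to \M(U)$, $f\mapsto f|_U$. Since $F$ is connected and $U\subset F$ is a non-empty open subset, any meromorphic function on $F$ vanishing identically on $U$ must vanish on all of $F$: this follows from the identity principle applied locally to the holomorphic numerators and denominators of a meromorphic representative. Consequently $\rho$ is an injective field homomorphism, and we may identify $\M(F)$ with a subfield of $\M(U)\cong \C(T_1,T_2)$ containing $\C$.

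Next, by hypothesis $\trdeg_\C\M(F)\ge 2$, while $\trdeg_\C\C(T_1,T_2)=2$; hence $\trdeg_\C\M(F)=2$ and the extension $\C(T_1,T_2)/\M(F)$ is algebraic. Since $\C(T_1,T_2)$ is finitely generated over $\C$, it is \emph{a fortiori} finitely generated over $\M(F)$, so this algebraic extension is in fact finite (and automatically separable, the base field having characteristic zero).

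Finally, invoke Castelnuovo's rationality theorem: any intermediate field $K$ with $\C\subset K\subset \C(T_1,T_2)$ satisfying $[\C(T_1,T_2):K]<\infty$ is itself purely transcendental over $\C$, i.e., $K\cong \C(T_1,T_2)$. Applied to $K=\M(F)$, this gives the desired conclusion. There is no serious obstacle: the entire argument reduces to setting up the restriction embedding $\M(F)\hookrightarrow \M(U)$ and quoting Castelnuovo. The only point worth double-checking is the reduction of the extension $\C(T_1,T_2)/\M(F)$ to a finite one, which uses the standard fact that an algebraic, finitely generated field extension is finite.
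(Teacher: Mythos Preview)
Your proof is correct and follows the same approach as the paper: embed $\M(F)$ into $\M(U)\cong\C(T_1,T_2)$ via restriction and invoke the two-dimensional L\"uroth theorem (which the paper states in the equivalent form ``any subfield $K\subset\C(T_1,T_2)$ with $\trdeg_\C K=2$ is itself isomorphic to $\C(T_1,T_2)$''). Your version is simply more explicit about the injectivity of restriction and the reduction to a finite extension, but the substance is identical.
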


\begin{proof}
Since $\M(F)$ embeds into $\M(U)$, it follows immediately from
the two-dimensional L\"uroth theorem.   
\end{proof}
(Recall that the two-dimensional L\"uroth theorem asserts that if
$K\subset \C(T_1,T_2)$ is a subfield containing~\C and $\trdeg_\C K=2$,
then $K\cong \C(T_1,T_2)$; this fact follows immediately from
the existence of a smooth projective model for any finitely generated
extension of~\C and from
Theorem  3.5 in~\cite[Chapter VI]{CCS}.)

Now we may begin the proof of Proposition~\ref{main}. Thus, let $F$ be
a smooth connected complex surface such that $\trdeg_\C\M(F)\ge 2$ and
let $C\cong\PP^1$ be a curve (one-dimensional complex submanifold)
such that $(C\cdot C)=d>0$. Let $W\subset F$ be a good neighborhood
of~$C$ in the sense of Definition~\ref{def:good}. We are to prove that
$\M(F)\cong\C(T_1,T_2)$; by virtue of Lemma~\ref{lemma:Lueroth} it
suffices to prove that $\M(W)\cong\C(T_1,T_2)$.

Since $\trdeg_\C\M(F)\le 2$
by virtue of
Proposition~\ref{fin.gen} and $\trdeg_\C\M(F)\ge 2$ by hypothesis, one has
$\trdeg_\C\M(F)=2$. Now $\M(F)$ is isomorphic to a subfield of
$\M(W)$, so $\trdeg_\C\M(W)\ge 2$. Since Proposition~\ref{fin.gen} implies that
$\trdeg_\C\M(W)\le 2$ and $\M(F)$ is finitely generated over~\C, one
concludes that 
$\M(W)$ is a finitely generated extension of~\C, of transcendence
degree~$2$. Hence, $\M(W)=\C(f,g,h)$, where the meromorphic functions $f$
and~$g$ are algebraically independent over~\C and $h$ is algebraic
over~$\C(f,g)$ (of course, if one may set $h=0$, there is nothing to
prove). Denote by $P$ an irreducible polynomial in three independent
variables $F$, $G$, and~$H$ such that $P(f,g,h)$ is identically zero.

Now let $Y\subset\C^3$ be the affine algebraic surface that is the
zero locus of~$P$, and let $X\subset\PP^N$ be a smooth projective
model of~$Y$. 

Denote by $V\subset W$ the open subset on which each of
the meromorphic functions $f$, $g$, and $h$ is well defined and
consider the holomorphic mapping $\Phi\colon V\to Y$ defined by the
formula $x\mapsto (f(x),g(x),h(x))$. The mapping~$\Phi$ extends to a
meromorphic mapping from $W$ to $\bar Y\subset\PP^3$, where $\bar Y$
is the closure of $Y$; composing this meromorphic mapping with a
birational mapping $\bar Y\dasharrow X$, one obtains a meromorphic
mapping $\Phi_1\colon W\dasharrow X$.

\begin{lemma}\label{lemma:O}
There exists a non-empty open subset $O\subset W$ such that $\Phi_1$
is defined on~$O$ and the derivative $D\Phi_1(x)$ is non-degenerate
for any~$x\in O$.  
\end{lemma}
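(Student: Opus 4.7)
The plan is to show that $\Phi_1$ has generic rank~$2$, from which the existence of $O$ will follow by lower semi-continuity of rank together with the fact that the indeterminacy locus of $\Phi_1$ is an analytic subset of~$W$ of codimension~$\geq 2$.

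The key input is that $\Phi_1^\ast\colon \M(X)\to\M(W)$ is a field isomorphism. Indeed, $\M(X)\cong\C(F,G,H)/(P)$ since $X$ is a smooth projective model of~$Y$, and $\Phi_1^\ast$ sends the classes of $F,G,H$ to $f,g,h$; because $\M(W)=\C(f,g,h)$ by construction, the image of $\Phi_1^\ast$ is all of $\M(W)$, and both fields have transcendence degree~$2$ over~\C.

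Granting this, I would argue by contradiction that $D\Phi_1$ has rank~$2$ somewhere. Suppose instead that $D\Phi_1$ has rank~$\leq 1$ wherever it is defined. It cannot be identically~$0$ (else $\Phi_1$ would be constant and $f$ would be a constant, contradicting its transcendence over~\C), so there is a non-empty open $U\subset W$ on which $D\Phi_1$ has constant rank~$1$. By the constant rank theorem, after shrinking~$U$, the map $\Phi_1|_U$ factors through a locally closed $1$-dimensional complex submanifold $S\subset X$. Its analytic closure $\bar S$ in the compact projective surface~$X$ is a $1$-dimensional analytic subset (via Remmert-Stein extension), hence an algebraic curve (by Chow's theorem). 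Consequently $F|_{\bar S}$ and $G|_{\bar S}$ are meromorphic functions on a compact algebraic curve, so they are algebraically dependent over~\C; pulling back such a relation through $\Phi_1|_U\colon U\to\bar S$ and extending by analytic continuation along the connected set~$W$ yields a nontrivial polynomial relation $Q(f,g)=0$, contradicting the algebraic independence of $f$ and~$g$.

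Thus $D\Phi_1$ has rank~$2$ at some point; by lower semi-continuity of rank, the locus where it equals~$2$ is open, and intersecting this with the complement of the indeterminacy locus of~$\Phi_1$ produces the required non-empty open set~$O$. The subtlest step is the Remmert-Stein/Chow identification of~$\bar S$ with an algebraic curve: one needs to control $\bar S\setminus S$ to ensure the extension applies. This is the only genuinely delicate point; everything else is formal.
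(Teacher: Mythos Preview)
Your overall strategy---assume $\rank D\Phi_1\le 1$ everywhere and derive an algebraic relation between $f$ and $g$---matches the paper's. The gap is exactly where you flag it, and it is fatal as stated: Remmert--Stein requires $S$ to be a closed analytic subset of the complement of an analytic set of strictly smaller dimension, but your $S$ is merely a locally closed one-dimensional submanifold obtained from the constant rank theorem; the complement of the open set in which $S$ is closed has no reason to be analytic, let alone finite. Without that step you have no algebraic curve on which to restrict $F$ and $G$, and the argument collapses. The phenomenon is genuine: the holomorphic map $\C^2\to\C^2\subset\PP^2$, $(z,w)\mapsto(z,e^z)$, has rank $\le 1$ everywhere, yet its image (the graph of $\exp$) lies in no algebraic curve, and the pulled-back coordinate functions $z$ and $e^z$ are algebraically independent. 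So ``rank $\le 1$ everywhere'' by itself cannot force algebraic dependence; something specific to $W$ must be used, and your observation that $\Phi_1^*$ is a field isomorphism does not supply it (it only tells you the image is Zariski dense in $X$, which is compatible with the $\exp$ example).

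The paper fills this gap using the good-neighborhood structure of $W$. It first reduces to $\Psi=(f,g)\colon W\dasharrow\PP^2$ and then exploits the rational curves $C_b\cong\PP^1$ sweeping out $W$: the restriction $\Psi|_{C_b}$ extends to a holomorphic map $\PP^1\to\PP^2$, whose image $Z$ is automatically an algebraic curve by Chow. Where $\rank D\Psi=1$, the fibers of $\Psi$ give a foliation transverse to $C_b$; sliding along these fibers shows that each nearby $C_{b'}$ also has $\Psi(C_{b'})\subset Z$. Since the $C_{b'}$ fill an open subset of $W$, one gets $\Psi(W)\subset Z$ and hence an algebraic relation between $f$ and $g$. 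In short, the compactness of the individual curves $C_b$ is what replaces your unavailable Remmert--Stein step; if you want to repair your argument, this is the ingredient to import.
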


Assuming this lemma for a while, let us finish the proof of
Proposition~\ref{main}.

Our construction of the surfaces $X$ and~$Y$ implies that
$\M(X)\cong\M(W)$; hence, to prove
Proposition~\ref{main} it suffices to show that
$\M(X)\cong\C(T_1,T_2)$. We will derive this fact from the Castelnuovo
rationality criterion (see for example \cite[Chapter VI,
  3.4]{CCS}), which may be stated as follows.

\begin{theorem}[Castelnuovo]\label{Castelnuovo}
Suppose that $X$ is a smooth projective surface over~\C. Then
$\M(X)\cong \C(T_1,T_2)$ if and only if $H^0(X,\Omega^1_X)=0$ and
$H^0(X,\omega_X^{\otimes2})=0$.   
\end{theorem}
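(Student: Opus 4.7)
The plan is to prove Theorem~\ref{Castelnuovo} in two halves: an easy necessity direction, and a deep sufficiency direction handled via the Noether--Enriques structure theorem for ruled surfaces.

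For \emph{necessity}, I would observe that $H^0(X,\Omega_X^1)$ and $H^0(X,\omega_X^{\otimes 2})$ are birational invariants of smooth projective surfaces --- they compute the irregularity $q(X)$ and the second plurigenus $P_2(X)$ respectively --- and both vanish for $X = \PP^2$ by a direct cohomology calculation. Hence they vanish on every smooth projective surface birational to $\PP^2$.

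For \emph{sufficiency}, assume $H^0(X,\Omega^1_X) = H^0(X,\omega_X^{\otimes 2}) = 0$. The strategy is to construct a rational pencil: a dominant meromorphic map $X \dasharrow \PP^1$ whose general fibre is isomorphic to $\PP^1$. Once such a pencil is in hand, the Noether--Enriques theorem gives a birational equivalence of $X$ with $\PP^1 \times C$ for some smooth curve $C$, and birational invariance of $q$ forces $g(C) = q(X) = 0$; hence $C \cong \PP^1$ and $\M(X) \cong \C(T_1,T_2)$. To produce the pencil, I would combine Riemann--Roch $\chi(\Oo_X(D)) = (D \cdot (D - K_X))/2 + \chi(\Oo_X)$ with $\chi(\Oo_X) = 1 - q + p_g = 1$ (since $p_g \le P_2 = 0$) to locate a divisor class $D$ with $D^2 \ge 0$, $D \cdot K_X < 0$, and at least two independent sections; the adjunction formula $2g(D) - 2 = D \cdot (D + K_X)$, together with the vanishing of $P_2$, then forces a general smooth member of $|D|$ to have geometric genus zero.

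The main obstacle is this last step: showing that a general member of $|D|$ really is smooth, irreducible, and of arithmetic genus zero, rather than merely having small Euler characteristic. This is the technical heart of Castelnuovo's original argument, and it requires combining Bertini-type irreducibility arguments with vanishing statements of the form $H^0(X,\Oo_X(2K_X + E)) = 0$ for appropriate effective $E$, which must be extracted from $P_2 = 0$ via a careful choice of $D$. A logically cleaner but historically later route is to invoke the Enriques classification of surfaces to conclude from $P_2 = 0$ that the Kodaira dimension $\kappa(X) = -\infty$ and $X$ is birationally ruled over some smooth curve $C$; then $q(X) = 0$ immediately identifies the base as $\PP^1$, yielding rationality.
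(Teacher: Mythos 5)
The paper does not prove Theorem~\ref{Castelnuovo} at all: it is imported as a classical black box with the pointer to \cite[Chapter VI, 3.4]{CCS}, so there is no internal argument to compare yours against. Judged on its own terms, your necessity direction is complete and correct: $h^0(\Omega^1_X)$ and $h^0(\omega_X^{\otimes 2})$ are birational invariants of smooth projective surfaces (pullback of covariant tensors along a birational map, defined off a codimension-$2$ set, extends by Hartogs), they vanish on $\PP^2$, and your auxiliary observation $p_g\le P_2$ is justified since multiplication by a fixed nonzero section of $\omega_X$ embeds $H^0(\omega_X)$ into $H^0(\omega_X^{\otimes 2})$. Your overall strategy for sufficiency (produce a pencil of rational curves, apply Noether--Enriques, then use $q=0$ to identify the base with $\PP^1$) is also the standard and correct route.

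However, as submitted the sufficiency half is a roadmap rather than a proof, and the missing step is the entire content of the theorem. Riemann--Roch with $\chi(\Oo_X)=1$ plus adjunction only controls Euler characteristics and arithmetic genera; it does not by itself produce a divisor class $D$ with $h^0(D)\ge 2$ and a general member that is \emph{irreducible and reduced} of genus zero --- ruling out reducible or multiple members is exactly where $P_2=0$ must be exploited through the delicate ``termination of adjunction'' analysis, which you explicitly leave open. Your fallback route also contains a misstatement: $P_2=0$ alone does \emph{not} imply $\kappa(X)=-\infty$ (bielliptic surfaces whose canonical class has torsion order $3$ or $6$ have $P_2=0$ and $\kappa=0$); one needs $q=0$ as well, which is available here but must be used, and the deduction then requires running through the classification of minimal surfaces with $\kappa\ge 0$ and $q=0$ (K3, Enriques, proper elliptic, general type all have $P_2>0$). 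Moreover, invoking the Enriques classification is legitimate only given a proof of it independent of Castelnuovo's criterion (e.g.\ via the $P_{12}$-theorem), a circularity concern worth acknowledging. In short: right strategy, correct easy half, but the technical heart is named rather than supplied --- which is precisely why the paper, whose purpose lies elsewhere, cites \cite{CCS} instead of proving the criterion.
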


Here, $H^0(X,\Omega^1_X)$ is the space of holomorphic 1-forms on~$X$
and $H^0(X,\omega_X^{\otimes2})$ is the space of holomorphic 2-forms
of weight~$2$ on~$X$; we are to check that, for our surface $X$, both
these linear spaces do not contain non-zero elements.

To begin with, observe that if $\eta$ is a holomorphic covariant tensor field
on~$X$ that is not identically zero, then $\Phi_1^*\eta$ is a
holomorphic tensor field on $W\setminus I$, where $I$ is the
indeterminacy locus of~$\Phi_1$, and, in view of Lemma~\ref{lemma:O},
$\Phi_1^*\eta$ is not identically zero. Since $I$ is a discrete subset
of the complex surface~$W$, $\Phi_1^*\eta$ extends to a tensor field on
the entire~$W$. Thus, to show that $H^0(X,\Omega^1_X)=0$ and
$H^0(X,\omega_X^{\otimes2})=0$ it suffices to show that
$H^0(W,\Omega^1_W)=0$ and $H^0(W,\omega_W^{\otimes2})=0$, that is,
that there are no non-trivial holomorphic $1$-forms or $2$-forms of
weight~$2$ on~$W$. We deal with these two types of tensor fields
separately.

\textbf{The absence of holomorphic $1$-forms.} This is just the
following lemma, which will be used in Section~\ref{sec:trdeg=1} as
well.
\begin{lemma}\label{/omega^1=0}
Suppose that $F$ is a non-singular complex surface, $C\subset F$ is a
curve such that $C\cong\PP^1$ and $(C\cdot C)>0$, and $W$ is a good
neighborhood of $C$. Then any holomorphic $1$-form on~$W$ is
identically zero.
\end{lemma}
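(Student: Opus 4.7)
The plan is to exploit two facts: first, that a holomorphic $1$-form on $\PP^1$ vanishes identically (since $H^0(\PP^1,\Omega^1)=0$), and second, that through every point of $W$ pass two transverse smooth rational curves contained in~$W$, as guaranteed by Proposition~\ref{transverse}.

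Let $\omega$ be a holomorphic $1$-form on~$W$, and fix an arbitrary point $x\in W$. By Definition~\ref{def:good} there is a connected open $\Delta\subset\Do(F,C)$ covering~$W$ by smooth rational curves $C_b\cong\PP^1$ with $b\in\Delta$, and each such~$C_b$ is contained in~$W$. In particular, the pullback $\omega|_{C_b}$ is a global holomorphic $1$-form on the Riemann sphere, hence identically zero; equivalently, $\omega$ annihilates every tangent vector to every such~$C_b$ at every point lying on~$C_b$.

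Now invoke Proposition~\ref{transverse} at the point~$x$: since $(C\cdot C)=d>0$, there exist two curves $C_1,C_2\subset W$ with $C_1\cong C_2\cong\PP^1$, both passing through~$x$ and transverse at~$x$. By the observation in the previous paragraph, $\omega$ vanishes on $T_xC_1$ and on $T_xC_2$. Transversality means $T_xC_1\oplus T_xC_2=T_xW$, so $\omega_x=0$ as an element of $T_x^*W$.

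As $x\in W$ was arbitrary, $\omega\equiv 0$, which is the claim. There is no real obstacle here: the only ingredients are the vanishing $H^0(\PP^1,\Omega^1)=0$ and Proposition~\ref{transverse}, and the argument is pointwise.
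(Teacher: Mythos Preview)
Your proof is correct and follows essentially the same approach as the paper: invoke Proposition~\ref{transverse} to produce two transverse rational curves through an arbitrary point~$x$, use $H^0(\PP^1,\Omega^1)=0$ to kill $\omega$ along each, and conclude $\omega_x=0$ by transversality. The only difference is cosmetic---your second paragraph about the $C_b$'s is not strictly needed, since the vanishing argument applies to any smooth rational curve in~$W$, in particular to the $C_1,C_2$ supplied by Proposition~\ref{transverse}.
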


\begin{proof}
  Suppose that $\omega$ is such a form.  Proposition~\ref{transverse}
  implies that, for any $x\in W$ there exist curves $C_1,C_2\subset
  W$, $C_1\cong C_2\cong\PP^1$ such that $C_1\cap C_2\ni x$ and $C_1$
  and $C_2$ are transverse at~$X$. Since there are no non-zero
  holomorphic $1$-forms on the Riemann sphere, the restriction of
  $\omega$ to both $C_1$ and~$C_2$ is identically zero. Hence, the
  linear functional $\omega_x$ that $\omega$ induces on $T_xW$ is zero
  on $T_xC_1,T_xC_2\subset T_xW$. Since $C_1$ and $C_2$ are transverse
  at~$x$, these two linear spaces span the entire $T_xW$, so
  $\omega_x=0$. Since $x\in W$ was arbitrary, $\omega=0$ and we are
  done.
\end{proof}

\textbf{The absence of holomorphic $2$-forms of weight~$2$.}  Recall
that differential $2$-forms of weight~$2$ on a surface~$G$ have, in
local coordinates~$(z,w)$, the form $f(z,w)(dz\wedge dw)^2$.  If
$\omega$ is such a form, then, for any point~$x\in G$, $\omega_x$ is a
mapping from $T_xG\times T_xG$ to \C; this mapping is uniquely
determined by its value at a given pair of linearly independent
tangent vectors.

\begin{lemma}\label{U/times/P^1}
If $G=U\times\PP^1$, where $U$ is an open subset of~\C, then there is
no non-trivial holomorphic $2$-form of weight~$2$ on~$G$. 
\end{lemma}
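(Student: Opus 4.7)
The plan is to reduce the claim to the classical vanishing $H^0(\PP^1, \omega_{\PP^1}^{\otimes 2}) = 0$, which holds because $\omega_{\PP^1}^{\otimes 2} \cong \Oo_{\PP^1}(-4)$. Cover $\PP^1$ by its two standard affine charts, with coordinates $w$ and $w' = 1/w$ respectively. In the product chart $U \times \C$ with coordinates $(z,w)$, a weight-$2$ form is represented as $f(z,w)(dz \wedge dw)^{\otimes 2}$ with $f$ holomorphic on $U \times \C$, and the task reduces to showing that $f$ vanishes identically.

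First I would compute the change-of-chart formula on the $\PP^1$ factor. From $dw = -dw'/(w')^2$ one obtains $(dz \wedge dw)^{\otimes 2} = (dz \wedge dw')^{\otimes 2}/(w')^4$, so in the second product chart the form is represented by $g(z, w') := f(z, 1/w')/(w')^4$. Holomorphicity of $g$ at $w' = 0$ then demands, for each fixed $z_0 \in U$, that the entire function $w \mapsto w^4 f(z_0, w)$ on $\C$ extend holomorphically across $w = \infty$. Expanding $f(z_0, w) = \sum_{n \geq 0} a_n(z_0) w^n$ and substituting $w = 1/w'$, the resulting Laurent series in $w'$ consists only of strictly negative powers $w'^{-n-4}$ with $n \geq 0$, all of which are singular at $w' = 0$. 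Hence every coefficient $a_n(z_0)$ vanishes, and since $z_0 \in U$ is arbitrary, $f \equiv 0$.

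Conceptually this just says that a global holomorphic weight-$2$ form on $U \times \PP^1$ restricts along each fiber $\{z_0\} \times \PP^1$ to a global section of $\omega_{\PP^1}^{\otimes 2}$, which must vanish. Accordingly there is no substantive obstacle: one could alternatively invoke the external-product decomposition $\omega_G \cong \pi_1^* \omega_U \otimes \pi_2^* \omega_{\PP^1}$ together with a K\"unneth-type computation, but the direct coordinate argument above is more elementary and self-contained.
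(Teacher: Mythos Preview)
Your proof is correct and follows essentially the same approach as the paper: both arguments restrict the weight-$2$ form to each fiber $\{z_0\}\times\PP^1$ and invoke the vanishing $H^0(\PP^1,\omega_{\PP^1}^{\otimes 2})=H^0(\PP^1,\Oo_{\PP^1}(-4))=0$. The only cosmetic difference is that the paper phrases the fiberwise restriction via contraction with the vector field $\partial/\partial z$ to obtain a quadratic differential on each fiber, whereas you carry out the two-chart coordinate computation explicitly.
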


\begin{proof}
Suppose that $\omega$ is such a form. If $z$ is the coordinate on
$U\subset\C$, then there exists a nowhere vanishing holomorphic vector
field~$\partial/\partial z$ on~$G$. For any $b\in U$, put
$C_b=\{b\}\times\PP^1$. To show that $\omega=0$ it suffices to show
that for any $b\in U$ and $x\in C_b$ one has
$\omega_x(\partial/\partial z,v)=0$, where $v\in T_xC_b$ is a nonzero
tangent vector.

Consider the tensor field $\eta=i_{\partial/\partial z}\omega$ (the
contraction of $\omega$ with $\partial/\partial z$), which is a family
of functions $\eta_x\colon T_xG\to\C$ for all $x\in G$,
$\eta_x(w)=\omega_x(\partial/\partial z,w)$ for $w\in T_xG$. The field
$\omega$ is a holomorphic section of $\Sym^2\Omega^1_G$, and its
restriction to each $C_b$ is a section of $\omega_{C_b}^{\otimes2}$,
i.e., a quadratic differential on~$C_b$, i.e., a section of
$\Oo_{C_b}(-4)$; such a holomorphic section must be identically zero,
so, for any $x\in C_b$ and any $v\in T_xC_b$,
$\omega_x(\partial/\partial z,v)=\eta(v)=0$, and we are done.
\end{proof}

Now suppose that $\omega$ is a differential $2$-form of
weight~$2$ on~$W$. To show that $\omega=0$, pick $d$ distinct points
$\lst pd\in C$, where $d=(C\cdot C)$, and let $\bar W$ be the blowup
of $W$ at \lst pd and $\bar C\subset\bar W$ be the strict transform
of~$C$.  It suffices to show that $\sigma^*\omega=0$, where
$\sigma\colon \bar W\to W$ is the blowdown morphism, and it will
suffice to show that
$\sigma^*\omega=0$ on a non-empty open subset of~$\bar W$. Since $\bar
C\cong\PP^1$ and $(\bar C\cdot\bar C)=0$, it follows from the main
result of Savelyev's paper~\cite{Savelyev} that a neighborhood of $\bar
C$ in $\bar W$ is isomorphic to $U\times\PP^1$, where $U$ is an open
subset of~$\C$. Now Lemma~\ref{U/times/P^1} applies.

This completes the proof of Proposition~\ref{main} modulo
Lemma~\ref{lemma:O}. 
\begin{proof}[Proof of Lemma~\ref{lemma:O}]
It suffices to prove this assertion for~$\Phi\colon V\to Y\subset
\C^3$ instead of~$\Phi_1$.  Moreover, if $\pi\colon Y\to\C^2$ is the
projection defined by forgetting the third coordinate, then the
derivative of $\pi$ is non-degenerate on a non-empty Zariski open
subset of the smooth locus of~$Y$; hence, it suffices to establish the
existence of such a set~$O$ for the mapping $\Psi=\pi\circ\Phi\colon
V\to\C^2$, $\Psi\colon x\mapsto (f(x),g(x))$.

The mapping $\Psi$ extends to a meromorphic mapping $W\dasharrow
\PP^2$ defined, in the homogeneous coordinates, by $x\mapsto
(1:f(x):g(x))$; abusing the notation, we will denote this meromorphic
mapping by~$\Psi$ as well. The indeterminacy locus of the meromorphic
mapping $\Psi$ is a discrete subset of~$W$.

If there exists at least one point $x\in W$ where $\Psi$ is defined
and $D\Psi(x)$ is non-degenerate, we are done. Assume now that the
derivative of $\Psi$ is degenerate at any point where $\Psi$
is determined; we will show that this assumption leads to a
contradiction.

Let $\Delta\subset \Do(F,C)$ be the open subset such that
$W=q(p^{-1}(\Delta))$, where $p\colon \Ho\to \Do(F)$ and $q\colon
\Ho\to F$ are the canonical projections of the restriction of the
universal family~\Ho; recall that the curve $q(p^{-1}(b))\subset F$,
where $b\in\Delta$, is denoted by~$C_b$.

Observe that the restriction of $\Psi$ to any $C_b$ is a meromorphic,
hence holomorphic, mapping from~$C_b$ to $\PP^2$. For any $b_1,b_2\in
\Delta$, $(C_{b_1}\cdot C_{b_2})=(C\cdot C)>0$, hence $C_{b_1}\cap
C_{b_2}\ne \varnothing$. Thus, if the restriction of $\Psi$ to each
$C_b$ is constant, then $\Psi$ is constant, which is nonsense. Hence,
we may and will
pick a $b\in\Delta$ such that the restriction of $\Psi$
to $C_b$ is not constant. Put $\Psi(C_b)=Z\subset\PP^2$; it follows
from the Chow theorem that $Z$ is a projective algebraic curve.

Observe as well that the set of points $x\in W$ where $\Psi$ is
defined and $D\Psi(x)=0$ must have empty interior (otherwise $\Psi$
would be constant). Hence, there exists a closed analytic subset $D$
with empty interior such that, for any $x\in W\setminus D$, $\Psi$ is
defined at~$x$ and $\rank D\Psi(x)=1$. Hence, all the fibers of the
restriction $\Psi|_{W\setminus D}$ are smooth analytic curves in
$W\setminus D$.

\begin{figure}
  \centering
  \includegraphics{fol.mps}
  \caption{}\label{figure}
  
\end{figure}

Pick a point $x\in C_b\setminus D$ such that $T_xC_b$ is not contained
in $\Ker D\Psi(x)$. There exists an open set $U\ni x$, $U\subset
W\setminus D$ such that for any $y\in C_b\cap U$ the set
$\Psi^{-1}(\Psi(y))$ is a smooth analytic curve transverse to~$C$
at~$y$.  Now for any $b'\in\Delta$ that is close enough to~$b$ there
exists a non-empty open set $V\subset C_{b'}\cap U$ such that for any
$x'\in V$ there exists a point $y'\in C_b\cap U$ such that
$\Psi^{-1}(\Psi(y'))\cap C_{b'}$ contains~$x'$ (see Fig.~\ref{figure}).

Hence, $\Psi(V)\subset \Psi(C_b)$; since $V$ is a non-empty open
subset of~$C_{b'}$, one concludes that $\Psi(C_{b'})=\Psi(C_b)=Z\subset
\PP^2$. Since the curves $C_{b'}$, for all $b'$ close enough to~$b$,
sweep, by virtue of Proposition~\ref{prop:good_neighb}, an open subset
of~$W$, one concludes that $\Psi(W)\subset Z$. Since $Z$ is an
algebraic curve in $\PP^2$ and $\Psi$ is defined by the formula
$x\mapsto(1:f(x):g(x))$, it follows that the meromorphic functions $f$
and~$g$ are algebraically dependent, which yields the desired
contradiction.
\end{proof}

\section{Transcendence degree 1}\label{sec:trdeg=1}

In this section we prove Proposition~\ref{main1}. Its proof is
similar to that of Proposition~\ref{main}, but simpler.

To wit, by virtue of Proposition~\ref{fin.gen} the field $\M(F)$ is
finitely generated over~\C. Since $\trdeg_\C\M(F)=1$, one has
$\M(F)=\C(f,g)$, where the meromorphic functions~$f$ and~$g$ are
algebraically dependent over~\C (if $\M(F)$ is generated by one
function, there is nothing to prove). Denote by $P$ an irreducible
polynomial in two independent variables $F$ and~$G$ such that
$P(f,g)=0$; let $Y\subset\C^2$ be the affine curve that is the
zero locus of~$P$, and let $X$ be the smooth projective curve (aka
compact Riemann surface) for which $\M(X)\cong \M(Y)$.

Denote by $V\subset F$ the open subset on which both
$f$ and $g$ are well defined and
consider the holomorphic mapping $\Phi\colon V\to Y$ defined by the
formula $x\mapsto (f(x),g(x))$. The mapping~$\Phi$ extends to a
meromorphic mapping from $F$ to $\bar Y\subset\PP^2$, where $\bar Y$
is the closure of $Y$; composing this meromorphic mapping with a
birational mapping $\bar Y\dasharrow X$, one obtains a meromorphic
mapping $\Phi_1\colon F\dasharrow X$. Since, by our construction,
$\M(F)\cong \M(X)$, it suffices to show that $X\cong\PP^1$, or,
equivalently, that there are no non-trivial holomorphic $1$-forms
on~$X$. 

To that end, let $I\subset F$ be the
indeterminacy locus of~$\Phi_1$; it is a discrete subset
of~$F$. Choose a good neighborhood $W\supset C$; since $\Phi_1$ is not
constant, there exists a non-empty open
subset $O\subset W\setminus I$ such that $\rank D\Phi_1(x)=1$ for any
$x\in O$. Now if $\omega\ne0$ is a holomorphic form on $X$, then
$(\Phi_1|_{W\setminus I})^*\omega$ is a holomorphic form such that its
restriction to~$O$ is not identically zero. Extending it to~$W$, one
obtains a holomorphic $1$-form on $W$ which is not identically
zero. This contradicts Lemma~\ref{/omega^1=0}, and this contradiction
completes the proof of Proposition~\ref{main1}.

\bibliographystyle{amsplain}
\bibliography{field}

\end{document}